\documentclass[10pt]{amsart}
\usepackage{fancyhdr,xcolor,tikz}
\usepackage{mathtools,amssymb,thmtools,array,hyperref}
\usepackage{booktabs}
\usepackage{cleveref}
\definecolor{mylinkcolor}{rgb}{0.5,0.0,0.0}
\definecolor{myurlcolor}{rgb}{0.0,0.0,0.75}
\definecolor{mydeficolor}{rgb}{0.0,0.5,0.0}
\hypersetup{colorlinks=true,urlcolor=myurlcolor,citecolor=myurlcolor,linkcolor=mylinkcolor,linktoc=page,breaklinks=true}

\title[Abelian surfaces of small conductor from genus 3 double covers]{Abelian surfaces of small conductor\\ from genus 3 double covers}
\author[R. van Bommel]{Raymond van Bommel}
\author[C. Maistret]{C\'eline Maistret}
\author[J. Shi]{Jia Shi}
\author[A.V. Sutherland]{Andrew V.\ Sutherland}
\date{May 2026}

\theoremstyle{plain}
\newtheorem{theorem}{Theorem}[section]

\newtheorem{proposition}[theorem]{Proposition}
\newtheorem{corollary}[theorem]{Corollary}

\theoremstyle{definition}
\newtheorem{definition}[theorem]{Definition}

\theoremstyle{remark}
\newtheorem{remark}[theorem]{Remark}



\newcommand{\mr}[1]{MathSciNet:\,\href{https://mathscinet.ams.org/mathscinet-getitem?mr=#1}{MR#1}}

\newcommand{\Q}{\mathbb{Q}}
\newcommand{\Qbar}{\overline{\mathbb{Q}}}
\newcommand{\Z}{\mathbb{Z}}
\newcommand{\F}{\mathbb{F}}

\renewcommand{\P}{\mathbb{P}}
\DeclareMathOperator{\Jac}{Jac}
\DeclareMathOperator{\End}{End}

\DeclareMathOperator{\rad}{rad}

\newcommand{\reductiontable}{
\begin{table}[ht!]
\begin{tabular}{c||>{\centering\arraybackslash}m{2.1cm}|c||>{\centering\arraybackslash}m{2.1cm}|>{\centering\arraybackslash}m{3.4cm}|c||c}
Prop. &$\mathcal{E}_p$    &$N_E$   &$\mathcal{C}_p$   &stable reduction of $C$  &$N_C$  &$N_A$  \\ \hline

\ref{prop:everything-good}

&\begin{tikzpicture}
\draw (0,0)--(2,0);
\node[circle,draw,fill=black,inner sep=0pt,minimum size=3pt] at (0.4,0) {};
\node[circle,draw,fill=black,inner sep=0pt,minimum size=3pt] at (0.8,0) {};
\node[circle,draw,fill=black,inner sep=0pt,minimum size=3pt] at (1.2,0) {};
\node[circle,draw,fill=black,inner sep=0pt,minimum size=3pt] at (1.6,0) {};
\node at (1.0,-0.25) {g1};
\node at (1.0,0.3) {};
\end{tikzpicture}

&0

&\begin{tikzpicture}
\draw (0,0)--(2,0);
\node at (1.0,-0.25) {g3};
\node at (1.0, 0.3) {};
\end{tikzpicture}

&good  &0  &0 \\ \hline

\ref{prop:pair}

&\begin{tikzpicture}
\draw (0,0)--(2,0);
\node[circle,draw,fill=black,inner sep=0pt,minimum size=3pt] at (0.4,0) {};
\node[circle,draw,fill=black,inner sep=0pt,minimum size=3pt] at (0.8,0) {};
\node[circle,draw,fill=black,inner sep=0pt,minimum size=3pt] at (0.4,-1) {};
\node[circle,draw,fill=black,inner sep=0pt,minimum size=3pt] at (0.8,-1) {};
\node at (1.0,-0.25) {g1};
\node at (1.0,0.15) {};
\draw (0,-1)--(2,-1);
\draw (1.8,0.10)--(1.4,-0.35);
\draw (1.4,-0.7)--(1.8,-1.1);
\draw[densely dotted] (1.5,-0.14)--(1.5,-.93);
\end{tikzpicture}

&0

&\begin{tikzpicture}
\draw (0,0)--(2,0);
\node at (1.0,-0.2) {g2};
\node at (1.0, 0.15) {};
\draw (1.8,0.10)--(1.4,-0.35);
\draw (0.2,0.10)--(0.6,-0.35);
\draw (0.6,-0.7)--(0.2,-1.1);
\draw (1.4,-0.7)--(1.8,-1.1);
\draw (0,-1)--(2,-1);
\draw[densely dotted] (1.5,-0.14)--(1.5,-.93);
\draw[densely dotted] (0.5,-0.14)--(0.5,-.93);
\end{tikzpicture}

&genus 2 with \quad\quad\quad\quad 1 self-intersection &1 &1 \\ \hline

\ref{prop:three-plus-one}

&\begin{tikzpicture}
\draw (0,0)--(2,0);
\node[circle,draw,fill=black,inner sep=0pt,minimum size=3pt] at (0.8,0) {};
\node[circle,draw,fill=black,inner sep=0pt,minimum size=3pt] at (1.2,-1) {};
\node[circle,draw,fill=black,inner sep=0pt,minimum size=3pt] at (0.4,-1) {};
\node[circle,draw,fill=black,inner sep=0pt,minimum size=3pt] at (0.8,-1) {};
\node at (1.1,-0.2) {g1};
\node at (1.0,0.15) {};
\draw (0,-1)--(2,-1);
\draw (1.9,0.10)--(1.5,-0.35);
\draw (1.5,-0.7)--(1.9,-1.1);
\draw[densely dotted] (1.6,-0.14)--(1.6,-.93);\begin{footnotesize}
\node at (0.73,-0.55) {even length};
\end{footnotesize}
\end{tikzpicture}

&0

&\begin{tikzpicture}
\draw (0,0)--(2,0);
\node at (0.8,-0.2) {g2};
\node at (1.0, 0.15) {};
\draw[line width=1.2pt] (1.5,0.1)--(1.7,-0.35);
\draw (1.7,-0.15)--(1.5,-0.55);
\draw[line width=1.2pt] (1.5,-0.7)--(1.7,-1.1);
\draw (0,-1)--(2,-1);
\draw[densely dotted] (1.55,-0.3)--(1.55,-0.95);
\node at (0.8,-0.8) {g1};
\end{tikzpicture}

&genus 2 and genus 1 intersecting in a node &0 &0 \\ \hline

\ref{prop:quadruple}(i)

&\begin{tikzpicture}
\draw (0,0)--(2,0);
\node[circle,draw,fill=black,inner sep=0pt,minimum size=3pt] at (0.3,-1) {};
\node[circle,draw,fill=black,inner sep=0pt,minimum size=3pt] at (0.6,-1) {};
\node[circle,draw,fill=black,inner sep=0pt,minimum size=3pt] at (0.9,-1) {};
\node[circle,draw,fill=black,inner sep=0pt,minimum size=3pt] at (1.2,-1) {};
\node at (0.8,-0.2) {g1};
\node at (1.0,0.15) {};
\draw (1.8,0.10)--(1.4,-0.35);
\draw (1.4,-0.7)--(1.8,-1.1);
\draw[densely dotted] (1.47,-0.14)--(1.47,-.93);
\draw (0,-1)--(2,-1);
\node[circle,draw,inner sep=0pt,fill=white,minimum size=3pt] at (1.6,-0.125) {};
\begin{footnotesize}
\node at (1.75,-0.25) {$Q$};
\end{footnotesize}
\end{tikzpicture}

&0

&\begin{tikzpicture}
\node at (0,0.12) {};
\draw (0,0)--(2,0);
\node at (0.3,-0.2) {g1};
\draw (0,-.75)--(2,-.75);
\node at (1.3,-0.55) {g1};
\draw (1.5,0.1)--(1.8,-0.2);
\draw (1.8,-.55)--(1.5,-.85);
\draw[densely dotted] (1.75,-0.06)--(1.75,-0.69 );
\draw (0.3,-0.65)--(0.6,-0.95);
\draw (0.6,-1.3)--(0.3,-1.6);
\draw[densely dotted] (0.55,-0.81)--(0.55,-1.44 );
\draw (0,-1.5)--(2,-1.5);
\node at (1.0,-1.3) {g1};
\end{tikzpicture}

&three genus 1s \quad linked in a chain &0 &0 \\ \hline

\ref{prop:quadruple}(ii)

&\begin{tikzpicture}
\draw (0,0)--(2,0);
\node[circle,draw,fill=black,inner sep=0pt,minimum size=3pt] at (0.3,-1) {};
\node[circle,draw,fill=black,inner sep=0pt,minimum size=3pt] at (0.6,-1) {};
\node[circle,draw,fill=black,inner sep=0pt,minimum size=3pt] at (0.9,-1) {};
\node[circle,draw,fill=black,inner sep=0pt,minimum size=3pt] at (1.2,-1) {};
\node[circle,draw,inner sep=0pt,fill=white,minimum size=3pt] at (0.6,0) {};
\node at (1.1,-0.2) {g1};
\node at (1.0,0.15) {};
\draw (1.8,0.10)--(1.4,-0.35);
\draw (1.4,-0.7)--(1.8,-1.1);
\draw[densely dotted] (1.5,-0.14)--(1.5,-.93);
\draw (0,-1)--(2,-1);\begin{footnotesize}
\node at (0.4,-0.2) {$Q$};
\end{footnotesize}
\end{tikzpicture}

&0

&\begin{tikzpicture}
\draw (0,0)--(2,0);
\node at (1.0,-0.2) {g1};
\node at (1.0, 0.15) {};
\draw (1.8,0.10)--(1.4,-0.35);
\draw (0.2,0.10)--(0.6,-0.35);
\draw (0.6,-0.7)--(0.2,-1.1);
\draw (1.4,-0.7)--(1.8,-1.1);
\draw (0,-1)--(2,-1);
\draw[densely dotted] (1.5,-0.14)--(1.5,-.93);
\draw[densely dotted] (0.5,-0.14)--(0.5,-.93);
\node at (1.0,-0.8) {g1};
\end{tikzpicture}

&two genus 1s intersecting twice &1 &1 \\ \hline

\ref{prop:multiplicative}(i)

&\begin{tikzpicture}
\draw (0,0)--(2,0);
\node[circle,draw,fill=black,inner sep=0pt,minimum size=3pt] at (0.2,0) {};
\node[circle,draw,fill=black,inner sep=0pt,minimum size=3pt] at (0.6,0) {};
\node[circle,draw,fill=white,inner sep=0pt,minimum size=3pt] at (1,0) {};
\node[circle,draw,fill=black,inner sep=0pt,minimum size=3pt] at (1.4,0) {};
\node[circle,draw,fill=black,inner sep=0pt,minimum size=3pt] at (1.8,0) {};
\node at (1.0, 0.15) {};
\draw (1.6,0.1)--(1.8,-0.7);
\draw (1.8,-0.3)--(1.6,-1.1);
\draw (0.4,0.1)--(0.2,-0.7);
\draw (0.2,-0.3)--(0.4,-1.1);
\draw (0.2,-0.9)--(0.7,-1.2);
\draw (1.8,-0.9)--(1.3,-1.2);
\draw[densely dotted] (0.5,-1.15)--(1.5,-1.15);
\begin{footnotesize}
\node at (1.0,-0.25) {$Q$};
\end{footnotesize}
\end{tikzpicture}

&1

&\begin{tikzpicture}
\draw (0,1)--(2,1);
\node at (1.0,1.2) {g1};
\draw (0.2,0.9)--(0.3,1.5);
\draw (0.2,1.3)--(0.6,1.7);
\draw (1.6,0.9)--(1.5,1.5);
\draw (1.6,1.3)--(1.2,1.7);
\draw (1.8,1.1)--(1.7,0.5);
\draw (1.8,0.7)--(1.4,0.3);
\draw (0.4,1.1)--(0.5,0.5);
\draw (0.4,0.7)--(0.8,0.3);
\draw[densely dotted] (0.4,1.6)--(1.4,1.6);
\draw[densely dotted] (0.6,0.4)--(1.6,0.4);
\node at (0,1.72) {};
\end{tikzpicture}

&genus 1 with \quad\quad\qquad 2 self-intersections  &2  &1 \\ \hline

\ref{prop:multiplicative}(ii)

&\begin{tikzpicture}
\draw (0,0)--(2,0);
\node at (1.0, 0.05) {};
\draw (1.8,0.10)--(1.4,-0.35);
\draw (0.2,0.10)--(0.6,-0.35);
\draw (0.6,-0.7)--(0.2,-1.1);
\draw (1.4,-0.7)--(1.8,-1.1);
\draw (0,-1)--(2,-1);
\draw[densely dotted] (1.5,-0.14)--(1.5,-.93);
\draw[densely dotted] (0.5,-0.14)--(0.5,-.93);
\node[circle,draw,fill=black,inner sep=0pt,minimum size=3pt] at (0.5,0) {};
\node[circle,draw,fill=black,inner sep=0pt,minimum size=3pt] at (0.825,0) {};
\node[circle,draw,fill=white,inner sep=0pt,minimum size=3pt] at (1,-1) {};
\node[circle,draw,fill=black,inner sep=0pt,minimum size=3pt] at (1.175,0) {};
\node[circle,draw,fill=black,inner sep=0pt,minimum size=3pt] at (1.5,0) {};
\begin{footnotesize}
\node at (1.0,-0.75) {$Q$};
\end{footnotesize}
\end{tikzpicture}

&1

&\begin{tikzpicture}
\draw (0,0)--(2,0);
\node at (1.0, 0.15) {};
\draw[line width=1.2pt] (1.5,0.1)--(1.8,-0.5);
\draw (1.8,-0.15)--(1.6,-0.9);
\draw[line width=1.2pt] (0.5,0.1)--(0.2,-0.5);
\draw (0.2,-0.15)--(0.4,-0.9);
\draw[line width=1.2pt] (0.2,-0.7)--(0.7,-1.0);
\draw[line width=1.2pt] (1.8,-0.7)--(1.3,-1.0);
\draw[densely dotted] (0.5,-0.95)--(1.5,-0.95);
\node at (1.0,-0.2) {g2};
\end{tikzpicture}

&genus 2 with \qquad\qquad 1 self-intersection  &1 &0
\end{tabular}\\[0.12cm]
\caption{Results from \Cref{sect:construction} for odd primes.}\label{table:reductions}
\end{table}}

\begin{document}

\begin{abstract}
We describe the construction of a database of roughly half a million abelian surfaces over $\Q$ of small conductor arising as Pryms associated to a genus 3 double cover of a genus 1 curve. Our construction uses a method that provides a degree of control over the primes of bad reduction.
\end{abstract}

\maketitle 

\tableofcontents 

\section{Introduction}

Tables of elliptic curves of small conductor compiled by Cremona \cite{cremona} and others are now a key component of the $L$-functions and modular forms database (\href{https://www.lmfdb.org}{LMFDB}) \cite{LMFDB}.
These tables have a long history of service to the number theory community, and they have proven to be a rich source of problems and applications for computational number theorists.
The fact that the conductors are small is crucial to the utility of this dataset in the LMFDB, as it makes it feasible to explicitly match them to modular forms (via the modularity theorem \cite{BCDT}) and compute their $L$-functions.
In 2015 a database of 66\,158 genus~2 curves $C/\Q$ with minimal discriminant $|\Delta_{\min}|\le 10^6$ was added to the LMFDB \cite{BSSVY}.
These small discriminant curves necessarily have (Jacobians with) small conductors, but most small conductor curves do not have small discriminants.
This is due in large part to the presence of primes of \emph{almost good reduction} (see Definition~\ref{def:almost-good}), which divide the discriminant but not the conductor; these primes can be much larger than the conductor itself, as can be seen in the examples of \cite{MS25}.

More recently a larger database of genus~2 curves $C/\Q$ of small conductor was compiled by Booker and the fourth author \cite{BS26,SutRatpointsTalk}, and is now available in the \href{https://alpha.lmfdb.org/Genus2Curve/Q/}{alpha release of the LMFDB} \cite{alpha}.
This includes more than 6 million genus~2 curves of conductor $N\le 10^6$, more than 90 times as many as are known with $|\Delta_{\min}|\le 10^6$.
But this new database of genus~2 curves necessarily omits many abelian surfaces of small conductor, because most abelian surfaces over $\Q$ do not arise as a Jacobian.
Products of elliptic curves already account for more than 27 million distinct isogeny classes of abelian surfaces over $\Q$ of conductor less than $10^6$, and Weil restrictions of elliptic curves defined over quadratic fields account for many more.

Extensive tables of elliptic curves over $\Q$ and quadratic fields can be found in the LMFDB. Provided one is willing to assume modularity (now known for $\Q$ and all real quadratic fields, as well as many imaginary quadratic fields), there are rigorous methods to enumerate all such elliptic curves up to a given conductor bound.
But this is not the case for geometrically simple abelian surfaces, and after the recent heuristic compilation of genus-2 curves of small conductor, the next most interesting case to consider is geometrically simple abelian surfaces over $\Q$ that are not principally polarised (hence not Jacobians of curves), such as those that arise as Prym varieties associated to a genus 3 cover of an elliptic curve.
The necessity of considering such examples was demonstrated by the abelian surface of conductor $550$ discovered in the process of constructing a database of genus 3 nonhyperelliptic curves \cite{Sut19}, which arises as the Prym variety associated to a smooth plane quartic curve that admits a degree-2 map to an elliptic curve.
It corresponds to one of nine weight-2 paramodular forms of level $N\le 1000$ in the heuristic tables of Poor and Yuen \cite{PSY20,PY15,PY20,PY} for which no corresponding Jacobian is known.
Its existence was predicted by the paramodular conjecture of Brumer and Kramer \cite{BK14,BK19}.

This motivates our current work: a heuristic algorithm to construct abelian surfaces of small conductor that arise as Prym varieties associated to double covers of an elliptic curve.
A key feature of our approach, unlike the methods used to construct databases of genus 2 curves, is our ability to control, to a certain extent, the primes of bad reduction.
This not only allows us to construct many new geometrically simple abelian surfaces of small conductor, it also allows us to construct many Jacobians of genus 2 curves of small conductor that were not previously known.
This notably includes ten genus 2 curves whose Jacobians have good reduction away from 2, extending the list of 512 curves found by Robin Visser \cite{Visser}; see Section~\ref{sect:postscript} for a list of these curves. 

We should emphasise that our algorithm is necessarily heuristic, and we can make no claims of completeness.
For any fixed dimension $g$ and set of primes $S$, the set of abelian varieties of dimension $g$ over $\Q$ with good reduction away from $S$ is known to be finite (this is Shafarevich's conjecture, proved by Faltings \cite{Faltings}), but for $g>1$ no algorithm is known to enumerate this set.
Indeed, this is a major open problem whose solution would yield an algorithm for determining the set of rational points on a curve of genus $g\ge 2$ over $\Q$, as shown in \cite{AL}.   We have the more modest goal of constructing  many (but not necessarily all) abelian surfaces over $\Q$ with good reduction away from a specified set of primes. 

To construct such abelian surfaces as Prym varieties, we study degree-2 covers of genus 1 curves.
Our approach is to start with an elliptic curve~$E$ over $\Q$ and prescribe the branch points of the putative cover.

To do so, we pick a rational effective divisor $D$ on $E$ of degree 4, such that all prime divisors in $D$ occur with multiplicity 1.
Assuming there exists a rational divisor $D'$ such that $D \sim 2D'$ in the divisor class group of~$E$, there exists a rational function $f$ with $\mathrm{div}(f) = D - 2D'$. The cover $\phi \colon C \to E$ is obtained by adjoining the square root of $f$ to the function field of~$E$, and by Riemann-Hurwitz, $C$ is a curve of genus 3.
We denote the Prym variety associated to this cover $\phi$ by~$A$.

\begin{remark}\label{rmk:non-unique}
Note that the divisor $D$ does not determine the cover $\phi$.
There are two degrees of freedom.
The divisor $D'$ can be replaced by $D' + T$ for a 2-torsion element $T \in \mathrm{Pic}^0(E)$.
If~$T \neq 0$, this gives rise to a different curve $C'$ that generally is not isomorphic to~$C$, even over $\Qbar$; see also Remark \ref{rmk:2-torsion}.
It is also possible to replace the function $f$ by~$d \cdot f$ for some~$d \in \Q^*$.
This gives a quadratic twist of the original cover $\phi$.  We can check for quadratic twists of small conductor in a post-processing step.
\end{remark}

Similar to the case of hyperelliptic curves, where the stable reduction (i.e., the special fibre of a stable model) is understood through the degeneration of the branch points of the map to $\P^1$ (see \cite{Kausz}), we study the stable reduction of the genus 3 cover through the degeneration of the branch points on the genus~1 curve.
The method is more subtle in the case of the genus 3 cover, as the stable reduction of the cover is not entirely determined by the degeneration of the branch locus alone.
For example, in the case in which all the branch points coincide (see \Cref{prop:quadruple} or the corresponding row in \Cref{table:reductions}), it cannot be determined from the branch locus alone whether the stable reduction has one or two connected components above the genus 1 curve.
Indeed, it does not suffice to determine the degeneration of the branch locus; the degeneration of the whole divisor $D - 2D'$ must be considered.

\reductiontable

This is detailed in \Cref{sect:construction}, where we carefully curated some basic cases for which the Prym variety has conductor exponent 0 or 1. The cases chosen have relatively simple degenerations of the branch points, meaning they more likely to be found in a search while also giving rise to a small conductor exponent in the Prym.
The results for odd primes $p$ are summarised in \Cref{table:reductions}.
In this table, $N_E$, $N_C$, and $N_A$ are the conductor exponents of $E$, $\mathrm{Jac}(C)$, and $A$, respectively, $\mathcal{C}_p$ is the special fibre of a regular model of~$C$, $\mathcal{E}_p$ is the special fibre of a regular model of~$E$ that separates the branch points of the cover, black points indicate branch points, thick lines indicate components of multiplicity 2, dotted lines indicate chains of~$\P^1$s, and all components have geometric genus 0 and multiplicity 1 unless otherwise indicated.
For the prime~$p = 2$, we give a sufficient criterion for $C$ and the Prym to have good reduction at 2, see \Cref{prop:evenprime}.
We note that these results are far from exhaustive.
However, the same method can be used to study many other possible degenerations in the case that $p$ is odd. As with hyperelliptic curves, the criterion for $p = 2$ is rather ad hoc and we do not expect it to generalise easily.

The rest of the paper is organised as follows. In \Cref{sect:construction} we cover the mathematical background and technical details of the double covers we construct.  \Cref{sect:conductor} addresses the problem of computing the conductor of the Pryms associated to these double covers, and describes an efficient heuristic algorithm for doing so.  \Cref{sect:database} discusses the computations we ran to construct many Pryms of small conductor, and \Cref{sect:results} summarises our computational results, which include the enumeration of more than 100 million genus 3 double covers, among which we found nearly half a million distinct isogeny classes of Prym varieties with conductors below $2^{20}$, of which more than 80\,000 were not previously known.  This includes more than 8000 that are isogenous to a genus 2 Jacobian that was not previously known.

\subsection{Acknowledgements}

We would like to warmly thank the referees for their careful reading of this manuscript and their helpful suggestions and comments. The first and second authors were supported by the second author's Royal Society Dorothy Hodgkin Fellowship. The third and fourth authors were supported by Simons Foundation award 550033. We would like to thank Tim Dokchitser and Gergely Jakovac for their help computing models of curves in the experiments that led to the results in this paper.  We also thank Andrew Booker for providing the code we used to test the functional equation, and Nils Bruin for helpful comments.

\newcommand{\Zpunr}{\Z_p^{\mathrm{unr}}}

\section{Pryms with prescribed local data}
\label{sect:construction}

\subsection{Generalities}

Recall from the introduction that the genus 3 cover $\phi \colon C \to E$ was constructed by adjoining the square root of a function $f$ whose divisor is~$D - 2D'$.
First we show that $C$ has tame reduction at all primes $p> 3$.

\begin{proposition}\label{prop:tame-primes}
Let $\phi \colon C \to E$ be a degree $2$ map from a genus $3$ curve $C$ to a genus $1$ curve $E$, all defined over $\Q$.
Let $p > 3$ be a prime number.
Then $C$ has tame reduction at $p$.
\end{proposition}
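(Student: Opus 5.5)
The strategy is to show that $C$ acquires semistable reduction over an extension of $\Q_p$ whose ramification index divides an integer prime to $p$. We will use the standard criterion (Serre--Tate, Grothendieck): $\Jac(C)$ has tame reduction at $p$ iff the inertia group $I_p$ acts on $T_\ell \Jac(C)$ through a quotient of order prime to $p$. Equivalently, the wild inertia $P_p$ acts trivially on $H^1_{\acute et}(C_{\Qbar},\Q_\ell)$. So the goal is to show that $P_p$ acts trivially on $V_\ell\Jac(C)$.

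Step 1: decompose the cohomology. The cover $\phi$ gives an isogeny $\Jac(C) \sim E \times A$ defined over $\Q$, where $A$ is the Prym variety. Hence $V_\ell\Jac(C) \cong V_\ell E \oplus V_\ell A$ as $G_{\Q_p}$-representations, and it suffices to treat $E$ and $A$ separately.

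Step 2: the elliptic curve. For $E$ this is classical. Its semistable reduction is attained over $\Q_p(E[m])$ for any $m\ge 3$ prime to $p$. More directly, the image of inertia in the potentially good case has order dividing $24$, which is prime to $p$ when $p>3$. In the potentially multiplicative case, inertia acts through a quotient of order at most $2$. So $P_p$ acts trivially on $V_\ell E$.

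Step 3: the Prym. Here $A$ is an abelian surface. By Raynaud, $A$ acquires semistable reduction over $\Q_p^{\mathrm{unr}}(A[m])$ for $m\ge 3$ prime to $p$. Therefore the image of $I_p$ on $V_\ell A$ modulo its unipotent part is a finite subgroup of $\mathrm{GSp}_4(\Z_\ell)$ that embeds in $\mathrm{GSp}_4(\Z/m)$ for all such $m$. Serre's bound (Minkowski-type) says that a prime $q$ dividing the order of a finite subgroup of $\mathrm{GL}_{2g}(\Q)$ satisfies $q \le 2g+1$. For $g=2$ this gives $q\le 5$, so the case $p=5$ is not covered by this argument alone.

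Step 4 (the expected obstacle): remove the possible $5$-part at $p=5$ using the polarization and the cover structure. One option is to use that $A$ carries a $(1,2)$-polarization, or to work with the whole threefold $\Jac(C)$ directly, though dimension $3$ only allows primes up to $7$, which is worse.

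A cleaner route is to argue on the curve. An element of order $5$ in the finite monodromy would have to act on the stable reduction $\overline C$ compatibly with the involution $\iota$ coming from $\phi$, and it commutes with $\iota$. It would therefore induce an automorphism of order $5$ of the stable reduction $\overline E$ of the genus-$1$ quotient $C/\iota$. That quotient is semistable of genus $1$, and its automorphism groups (fixing the image structure) have order divisible only by $2$ and $3$.

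More carefully, wild inertia $P_p$ acts trivially on $V_\ell E$ by Step 2. It acts on the $4$-dimensional $V_\ell A$ through an element of order $5$ whose characteristic polynomial must be the $5$th cyclotomic polynomial $\Phi_5$, of degree $4$. Then $\Q(\zeta_5)$ acts on $A$ over $\Qbar$, so $A_{\Qbar}$ has CM by $\Q(\zeta_5)$. I would rule this out, or show that it still forces tameness, by checking compatibility with the $G_{\Q_p}$-equivariant $(1,2)$-polarization: an automorphism of order $5$ must preserve the kernel of the polarization, which is isomorphic to $(\Z/2)^2$. An element of order $5$ acting on $(\Z/2)^2$ is trivial, and I expect this to lead to a contradiction via the fixed-point structure on the dual. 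This step is where I expect the real work, and if it fails I would instead do a direct analysis of the cover at $p=5$ by separating the branch points of $D$ on the stable model of $E$.
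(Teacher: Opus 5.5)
Your Steps 1--3 are sound and settle $p\ge 7$. The case $p=5$, which you flag yourself, is not proved, and the arguments sketched in Step 4 contain errors.

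\textbf{The CM argument.} An inertia element $\sigma$ acting on $V_\ell A$ with characteristic polynomial $\Phi_5$ is a Galois automorphism, not an endomorphism of $A_{\Qbar}$. So ``$A_{\Qbar}$ has CM by $\Q(\zeta_5)$'' does not follow. At best, in the potentially good case, you get an order-$5$ automorphism of the reduction over $\overline{\F}_5$. Such surfaces exist: the Jacobian of $y^2=x^5-x$ over $\overline{\F}_5$ has the automorphism $x\mapsto x+1$. This is exactly why general abelian surfaces can be wildly ramified at $5$.

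\textbf{The curve-based argument.} This fails as written for two reasons. First, the quotient of the stable reduction of $C$ by $\iota$ is a semistable model of $E$ that can have extra components carrying the branch points; it is not the stable reduction of $E$. Second, semistable genus-$1$ curves can have automorphisms of order $5$: translations by $5$-torsion points on an ordinary curve, or rotations of a $5$-gon. Only the four marked branch points could exclude these, and you do not make this precise.

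\textbf{Comparison with the paper.} The paper's proof never looks at inertia acting on $V_\ell$. It argues as follows:
\begin{itemize}
\item $E$ becomes semistable over a tame extension $K$.
\item The four branch points generate an extension of $K$ with Galois group inside $S_4$, of order $24$, which is prime to $p$.
\item By \cite[Thm.\ 2.3]{LiuLorenzini}, $C$ becomes stable after a further extension of degree at most $2$.
\end{itemize}
The only group orders that arise divide $24$, so $p=5$ needs no separate treatment. The idea you are missing is that the degree-$4$ branch locus, rather than a Minkowski-type bound for $\mathrm{GSp}_4$, controls the wild ramification.

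\textbf{How to close your gap.} Your polarization idea can be completed in a few lines:
\begin{itemize}
\item By $\ell$-independence of characteristic polynomials of inertia, $\sigma$ also has characteristic polynomial $\Phi_5$ on $T_2A$.
\item Then $\det(1-\sigma\mid T_2A)=\Phi_5(1)=5$ is a $2$-adic unit, so $\sigma$ fixes no nonzero point of $A[2]$.
\item But $\phi^*(\Jac(E)[2])\subset A[2]$, the kernel of the $(1,2)$-polarization, is a nonzero Galois-stable subgroup.
\item $\sigma$ acts trivially on it, either by your Step 2 or because $|\mathrm{GL}_2(\F_2)|=6$ is prime to $5$.
\end{itemize}
This contradiction is the step your proposal lacks.
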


\begin{proof}
Because $E$ is of genus 1, $E$ attains stable reduction over a field extension~$K / \Q$ that is at most tamely ramified at $p$ since $p > 2\cdot \mathrm{genus}(E) + 2 = 4$.
Let $B \subset E$ be the branch locus of the map $\phi$, and let $K(B)$ be the Galois extension of $K$ generated by the coordinates of the points in $B$.
The Galois group $\mathrm{Gal}(K(B)/K)$ acts as a subgroup of $S_{|B|} = S_4$, and therefore $K(B) / K$ is at most tamely ramified at all primes above $p$. 
By \cite[Thm.\ 2.3]{LiuLorenzini}, the curve $C$ has stable reduction over an extension field $L / K(B)$ of degree at most 2.
This extension is also at most tamely ramified at all primes above $p$, and hence $p$ is a prime of tame reduction for~$C$.
\end{proof}

\begin{remark}\label{remark:conductor-Prym}
The conductor of the Prym variety is the quotient of the conductor of the Jacobian of the genus 3 curve by the conductor of $E$.
Therefore, by \Cref{prop:tame-primes}, to understand the conductor of the Prym variety at primes~$p > 3$, it suffices to understand the tame conductor exponent of $C$ and the conductor exponent of $E$ at these primes.
\end{remark}

\subsection{Local data at odd primes}

Let $C, E, \phi, f, D$, and $D'$ be as before.
In order to understand the geometric properties of $C$ at a prime $p > 2$, we look at how the divisor $D - 2D'$ degenerates modulo powers of~$p$.
For the purpose of this work, we will assume that $E$ has stable reduction at $p$, even though it would be possible to also study the case in which $E$ does not have stable reduction using this method.
As discussed in the introduction, the approach is very similar to that of hyperelliptic curves.
In the hyperelliptic case, $D$ consists of the Weierstra\ss\ points on $\P^1$, and $D'$ is $(g+1) \cdot \infty$, and it only matters how~$D$ degenerates. In our case it also matters how~$D'$ degenerates.
The following is an adaptation of a method for hyperelliptic curves that is explicitly described in \cite[Sect.\ 4]{Kausz}, and can be found in more generality in \cite[Lemma~1.9]{LiuLorenzini}.

We construct a tree-like regular model $\mathcal{E} / \Zpunr$ that separates all the points in $D$ and $D'$ by repeatedly blowing up, starting from a semi-stable regular model of $E$ over $\Zpunr$.
The function $f$ gives rise to a rational function~$\tilde{f} \in K(\mathcal{E})$, because $E$ and $\mathcal{E}$ have the same function field.

Similar to \cite[Sect.\ 4]{Kausz}, we define a divisor $D_{\mathrm{odd}}$ (called $C$ in loc.\ cit.) which captures the odd part of $\mathrm{div}(\tilde{f})$, and a divisor $B$ such that $2B = D_\mathrm{odd} - \mathrm{div}(\tilde{f})$.
Then we construct a cover $\mathcal{C}$ of $\mathcal{E}$ by taking $\mathrm{Spec}(\mathcal{O}_\mathcal{E} \oplus \mathcal{L}^{-1})$, where the bundle $\mathcal{L}$ is defined as $\mathcal{O}_{\mathcal{E}}(B)$.
The model $\mathcal{C} / \Zpunr$ of $C$ is then a regular model, see \cite[Prop.\ 4.3]{Kausz}.

Note that $\mathrm{div}(\tilde{f})$ can also contain vertical divisors of $\mathcal{E}$.
The divisor $\mathrm{div}(\tilde{f})$ must have the property that the intersection product satisfies $\mathrm{div}(\tilde{f}) \cdot V = 0$ for any vertical divisor $V$ of $\mathcal{E}$.
This uniquely determines the vertical part of $\mathrm{div}(\tilde{f})$ up to multiples of the whole special fibre $\mathcal{E}_p$ of $\mathcal{E}$, see \cite[Thm.\ III.3.6]{Lang88}.
In \cite[Sect.\ 4]{Kausz}, this is made explicit in terms of the number of points in $D$ and $D'$ whose reduction lies in a certain vertical component or in a lower component in the tree.
If $\tilde{f}$ has odd order at a vertical component of $\mathcal{E}$, then the unique component above it will have multiplicity~2, and if $\tilde{f}$ has even order then the components above it will have multiplicity~1.
Therefore, if there are no vertical components in $D_\mathrm{odd}$, then the model $\mathcal{C}$ is semi-stable.
If not, then $\mathcal{C}$ will become semi-stable after a ramified extension of degree~2, see for example \cite[Thm.\ 2.3]{LiuLorenzini}.

The following propositions list some configurations for which the conductor exponent of $C$ (and hence $A$) is small.
A pictorial summary of these can be found in \Cref{table:reductions}.

\begin{proposition}\label{prop:everything-good}
Suppose that $E$ has good reduction at $p$.
If the four points in $D$ reduce to distinct points in $E \bmod p$, then, after a quadratic twist by $p$ if necessary, $C$ has good reduction at $p$.
\end{proposition}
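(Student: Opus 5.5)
Since $E$ has good reduction at $p$, we take $\mathcal{E}_0$ to be the smooth minimal model of $E$ over $\Zpunr$. Its special fibre is the smooth genus $1$ curve $E \bmod p$, which is a single component of multiplicity $1$. We then apply the construction described above, with the aim of showing that no blow-ups are needed to make the branch divisor $D_{\mathrm{odd}}$ regular, so that the resulting double cover is smooth.

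The first step is to analyse $\mathrm{div}(\tilde f)$ on $\mathcal{E}_0$. Its horizontal part is the closure of $D - 2D'$. The points of $D'$ may collide with each other or with points of $D$ modulo $p$, but they appear with even multiplicity, so they contribute nothing to the odd part. To avoid them altogether, we may replace $D'$ by a linearly equivalent divisor: choose $D'' \sim D'$ whose support reduces away from the reductions of $D$. This changes $f$ only by a square of a rational function, so the cover is unchanged, and $D - 2D''$ has the same odd part. Either way, the horizontal odd part is the closure $\overline{D}$ of $D$. Because the four points of $D$ reduce to distinct points, $\overline{D}$ is a disjoint union of sections over $\Zpunr$ (after passing to the field of definition of the points, which is unramified at $p$ because their reductions are distinct). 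So $\overline{D}$ is a regular divisor, étale over $\Zpunr$.

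The second step concerns the vertical part. There is only one vertical component, the special fibre $\mathcal{E}_p = \mathrm{div}(p)$, so the vertical part of $\mathrm{div}(\tilde f)$ is $m\,\mathcal{E}_p$ for some integer $m$. If $m$ is odd, we replace $f$ by $pf$, which is exactly the quadratic twist by $p$ in the statement. After this, $m$ is even and $D_{\mathrm{odd}} = \overline{D}$. We then set $B = \tfrac{1}{2}(\overline{D} - \mathrm{div}(\tilde f))$ and $\mathcal{C} = \mathrm{Spec}(\mathcal{O}_{\mathcal{E}_0} \oplus \mathcal{O}(B)^{-1})$. Locally this is $y^2 = u$, where $u$ is a local equation for $\overline{D}$ (up to a unit).

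The last step is to check that $\mathcal{C}$ is smooth over $\Zpunr$. Since $p$ is odd, the double cover is étale away from $\overline{D}$. Near $\overline{D}$ it has the form $y^2 = t$, with $t$ part of a system of parameters whose zero locus is smooth over the base. Hence $\mathcal{C}$ is smooth, and its special fibre is a double cover of $E \bmod p$ branched at four distinct points, which is a smooth curve of genus $3$ by Riemann--Hurwitz. Therefore $C$ (or its twist) has good reduction at $p$.

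I expect the main obstacle to be justifying the treatment of $D'$ when its points collide with those of $D$ modulo $p$. The cleanest fix is the moving-lemma argument above: the $\Zpunr$-points of $E$ are plentiful, so we can choose a representative of the class of $D'$ whose reduction avoids the four reduced branch points. We must also check that the vertical multiplicity is the only remaining parity obstruction, and that it is removed by the twist.
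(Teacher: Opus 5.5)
Your proof is correct and follows the same route as the paper. You take the smooth model with no blow-ups, twist by $p$ so that $\tilde f$ has even order along the single vertical component, and note that the resulting cover is regular with special fibre a double cover of $E \bmod p$ branched at four distinct points, hence smooth of genus $3$. The obstacle you anticipate with $D'$ is not real, so the moving-lemma step can be dropped: $D'$ enters $\mathrm{div}(\tilde f)$ with even coefficients, so collisions of its reductions affect neither $D_{\mathrm{odd}}$ nor the local equation $y^2 = u$.
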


\begin{proof}
In this case, no blow-ups are necessary and $\mathcal{E}$ can be taken to be a smooth model of $E$.
If $\widetilde{f}$ happens to have odd order at the vertical divisor, then we do a quadratic twist.
That is, we replace $f$ by $p \cdot f$, and the resulting function has even order at the vertical divisor.
Now the constructed cover $\mathcal{C}$ is regular and semi-stable.
The special fibre $\mathcal{C}_p$ is a degree-2 cover, branched at a degree 4 divisor on~$\mathcal{E}_p$, i.e., $\mathcal{C}_p$ is a smooth curve of genus 3.
\end{proof}

\begin{proposition}\label{prop:pair}
Suppose that $E$ has good reduction at $p$.
If exactly two points in $D$ reduce to the same point in $E \bmod p$, then, after a quadratic twist by $p$ if necessary, $C$ has stable reduction at $p$, with special fibre being a genus 2 curve with a self-intersection.
\end{proposition}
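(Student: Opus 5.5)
The plan is to follow the Kausz-style construction described above: build a tree-like regular model $\mathcal{E}$ that separates the points of $D$ and $D'$, compute $D_{\mathrm{odd}}$ and the vertical part of $\mathrm{div}(\tilde f)$, and read off the special fibre of the double cover $\mathcal{C}$. Since $E$ has good reduction, start from a smooth model of $E$ over $\Zpunr$. Let $P_1,P_2\in D$ be the two points that collide modulo $p$, at a point $\bar P$ of $\mathcal{E}_p$, and let $k\ge 1$ be their $p$-adic closeness, i.e.\ the local intersection number of their closures. Blowing up $\bar P$ repeatedly ($k$ times, after a ramified base change if needed; over $\Zpunr$ regularity may instead force a chain) separates $P_1$ and $P_2$. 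This produces a chain of $\P^1$s $V_1,\dots,V_k$ attached to the genus 1 component $V_0$, with $P_1,P_2$ reducing to distinct smooth points of the last component $V_k$. Points of $D'$ may also collide with $P_1,P_2$ or with each other. I would either blow these up too, noting that these components receive even multiplicity from $2D'$ and so do not affect $D_{\mathrm{odd}}$, or first choose $D'$ within its class so that its support reduces away from $\bar P$. The latter is the cleaner choice I would try first.

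Next I compute the vertical orders of $\tilde f$ using $\mathrm{div}(\tilde f)\cdot V_i=0$, normalised so that the order along $V_0$ is even, twisting $f$ by $p$ if necessary as in \Cref{prop:everything-good}. Following Kausz, the order along $V_i$ changes linearly along the chain with slope equal to the number of horizontal points of $D$ (counted with parity) lying beyond $V_i$. Here that number is $2$, so every $V_i$ has even order. Thus $D_{\mathrm{odd}}$ is horizontal, and it equals the closure of $D$.

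With this, the cover $\mathcal{C}$ is regular, and all its components have multiplicity $1$, so it is semistable. Above $V_0$ the cover is branched at the two points of $D$ not in the collision, plus possibly the intersection point with $V_1$. A local computation settles this: at that point the branch divisor restricted to $V_0$ must have even degree, giving degree $2$ in total, namely the two other points. Riemann--Hurwitz then shows the component above $V_0$ has genus $2$, and it meets the chain in two points. Above $V_k$ the cover is branched at $P_1,P_2$ only, giving a $\P^1$ mapping $2:1$. Above each intermediate $V_i$ it is unbranched, hence consists of two $\P^1$s. The preimage of the chain is therefore a cycle of rational curves joining the two points above $V_0\cap V_1$. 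Contracting these $(-2)$-curves yields the stable model: a genus $2$ curve with one node, i.e.\ one self-intersection. This is consistent with the arithmetic genus $2+1=3$.

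The main obstacle I expect is controlling $D'$, so that its degeneration does not change the parity pattern or create an unwanted branch component. I would handle it by replacing $D'$ by a linearly equivalent rational divisor, adjusting $f$ by a square of a function, to move it away from $\bar P$. If that fails, I would verify directly that blow-ups centred at points of $D'$ only introduce even-order components, which become unbranched rational tails or chains and are contracted in the stable model.
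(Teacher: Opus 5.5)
For the case where the colliding points $P_1,P_2$ are defined over $\Q_p^{\mathrm{unr}}$, your argument is the paper's. An even number of points of $D$ lies beyond each chain component, so the vertical orders of $\tilde f$ share one parity, and a twist by $p$ makes them even. Riemann--Hurwitz then gives a genus 2 component over $V_0$ and a loop of $(-2)$-curves over the chain. Your extra care with $D'$ is harmless.

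The gap is the other case the hypotheses allow, which the paper treats separately. Inertia must fix the two non-colliding points of $D$, since their reductions are distinct. It may, however, swap $P_1$ and $P_2$, so these can be conjugate over $L=\Q_p^{\mathrm{unr}}(\sqrt p)$. Your parenthetical ``after a ramified base change if needed'' does not cover this, because your later steps cannot all hold over a single base.

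Over $\mathcal{O}_L$, where $P_1,P_2$ do reach distinct points of the last component, $v_L(p)=2$. The order of $\tilde f$ along $V_0$ is then automatically even, and twisting by $p$ gives an isomorphic cover over $L$. So your construction only proves that $C_L$ has stable reduction. The missing step is descent to $\Q_p^{\mathrm{unr}}$:
\begin{itemize}
\item The generator $\sigma$ of $\mathrm{Gal}(L/\Q_p^{\mathrm{unr}})$ acts on the stable special fibre over $L$, trivially on $V_0$. Hence it acts on the genus 2 component $\Gamma$ either trivially or as the covering involution $\iota$.
\item In the second case $\sigma$ reflects the loop and acts by $-1$ on the Prym part of $H^1(\Gamma)$, so $C$ is not semistable over $\Q_p^{\mathrm{unr}}$.
\item Since $\sigma(\sqrt p)=-\sqrt p$, replacing $f$ by $pf$ composes the action with $\iota$. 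After a suitable twist, $\sigma$ is trivial on $\Gamma$ and fixes both branches of the node. This is what ``twist if necessary'' means in this case.
\end{itemize}

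Alternatively, you can stay over $\Zpunr$. The closure of $P_1+P_2$ is irreducible with local equation $t^2-\delta$, where $v_p(\delta)=2m+1$. After $m$ blow-ups it becomes the regular curve $t_m^2-up$ ($u$ a unit), tangent to $V_m$. Your parity and twisting argument then works verbatim, but two parts of your description must change:
\begin{itemize}
\item $P_1,P_2$ do not reduce to distinct points of the last component.
\item Above $V_m$ there are two $\P^1$s crossing in a regular node (locally $w^2=t_m^2-up$), rather than one $\P^1$ branched at two points. For $m=0$ the node lies directly on the curve above $V_0$.
\end{itemize}
The preimage of the chain is still a loop of $(-2)$-curves, so the conclusion survives once this is corrected.
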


\begin{proof}
Let us first assume the points occurring in $D$ are defined over $\Q_p^{\mathrm{unr}}$.
Then $\mathcal{E}$ is obtained by blowing up a smooth model several times, and its special fibre is a genus 1 curve with two branch points, and attached to that genus 1 curve, there is a chain of $\P^1$s with the two other branch points at the end (as in \Cref{table:reductions}).
Since there is an even number of branch points on each vertical component, $\tilde{f}$ either has even order at all vertical components or odd order at all vertical components.
We multiply $f$ again by the appropriate power of $p$ to make sure it has even order at all vertical components.
Using Riemann-Hurwitz, we see that the special fibre $\mathcal{C}_p$ consists of a genus 2 component lying above the genus 1 curve in $\mathcal{E}_p$, and a loop of $\P^1$s attached to the genus 2 component.

Suppose the points in $D$ reducing to the same point in $E \bmod p$ are defined over a ramified degree~2 extension of $\Q_p^{\mathrm{unr}}$.
Then, after extending the base field to this degree-2 extension, $\tilde{f}$ has even order at all vertical components, and we get a stable model as in the previous case over this degree-2 extension.
If inertia mirrors the chain of $\P^1$s and hence acts non-trivially on the genus 2 component, we can multiply $\tilde{f}$ by $p$ to make inertia act trivially on the genus 2 component. Therefore, after twisting if necessary, $C$ has the stable reduction at $p$ as claimed.
\end{proof}

The following proposition is the first case in which we get a prime of almost good reduction. Let us first recall the definition of almost good reduction.

\begin{definition}\label{def:almost-good}
The curve $C$ is said to have {\em almost good reduction} if $C$ has bad reduction at $p$ and the Jacobian of $C$ has good reduction at $p$. 
\end{definition}

This happens when the curve has stable reduction and the special fibre of the stable model has no loops. In this case, the reduction of the Jacobian is isogenous to the product of the Jacobians of the irreducible components of this special fibre. As a consequence, $C$ has conductor exponent 0 at $p$, even though $C$ has bad reduction at $p$. 

\begin{proposition}\label{prop:three-plus-one}
Suppose that $E$ has good reduction at $p$ and that all points in $D$ are defined over $\Q_p^\mathrm{unr}$.
Let $n$ be a positive integer and assume that three of the points in $D$ reduce to the same point in $E \bmod p^{2n}$, but no two of them reduce to the same point in $E \bmod p^{2n+1}$. Suppose furthermore that the fourth point reduces to a different point in $E \bmod p$. Then, after a quadratic twist by $p$ if necessary, $C$ has almost good reduction at $p$, with special fibre consisting of a genus $2$ curve intersecting a genus 1 curve.
\end{proposition}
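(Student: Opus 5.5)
We follow the strategy of \Cref{prop:pair}, using the Kausz-type construction recalled above. Since all points of $D$ are defined over $\Q_p^{\mathrm{unr}}$ and $E$ has good reduction, no base extension is needed to separate the branch points.

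\emph{Step 1: the model $\mathcal{E}$.} We build $\mathcal{E}$ by blowing up a smooth model of $E$ at the common reduction point $P_0$ of the three coalescing points $Q_1,Q_2,Q_3$. We keep blowing up at the point where their strict transforms meet, $2n$ times in total. This produces a chain of $\P^1$'s $V_1,\dots,V_{2n}$ attached to the genus $1$ component $V_0$. The three points $Q_1,Q_2,Q_3$ reduce to three distinct points on the last component $V_{2n}$, and $V_{2n}$ has only one other special point, namely where it meets $V_{2n-1}$. The fourth point $Q_4$ lies on $V_0$ away from $P_0$.

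\emph{Step 2: parity of $\operatorname{ord}_{V_i}\tilde f$.} The vertical part of $\mathrm{div}(\tilde f)$ is fixed by the condition $\mathrm{div}(\tilde f)\cdot V_i=0$. Since $\mathcal{E}$ is a tree, the parity of $\operatorname{ord}_{V_i}\tilde f$ changes along the chain according to the number of odd horizontal points (points of $D$) lying beyond each edge. Across every edge of the chain $V_0-V_1-\dots-V_{2n}$, the points beyond are $Q_1,Q_2,Q_3$, an odd number. Hence the parity alternates along the chain.

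Twisting by $p$ if necessary, we make $\operatorname{ord}_{V_0}\tilde f$ even. Because the chain has even length $2n$, the order along $V_{2n}$ is then also even. The components $V_1,\dots,V_{2n-1}$ alternate between odd and even order, starting and ending with odd. This is exactly where the hypothesis that the level $2n$ is even is used, and checking it carefully, including the convention for how the vertical multiplicities propagate, is the main point to get right.

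\emph{Step 3: the special fibre $\mathcal{C}_p$.}
\begin{itemize}
\item Above $V_0$ the cover is branched at $Q_4$ and at the node $P_0$, because the neighbouring component $V_1$ has odd order. By Riemann--Hurwitz this gives a genus $2$ curve (two branch points on genus $1$).
\item Above $V_{2n}$ the cover is branched at $Q_1,Q_2,Q_3$ and at the node with $V_{2n-1}$. Four branch points on $\P^1$ give a genus $1$ curve.
\item The odd-order components $V_1,V_3,\dots$ have a single multiplicity-$2$ component above them.
\item Each even-order $\P^1$ in between is a double cover branched at its two nodes, hence a single rational curve meeting each neighbour once.
\end{itemize}
So $\mathcal{C}_p$ is a genus $2$ curve joined to a genus $1$ curve by a chain of rational curves, some of multiplicity $2$, and its dual graph is a tree.

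\emph{Step 4: stable model and almost good reduction.} Contracting the rational chain, we expect the stable model to be the genus $2$ and genus $1$ components meeting in a single node. This is consistent with \cite[Thm.\ 2.3]{LiuLorenzini}, possibly after a ramified quadratic extension because of the multiplicity-$2$ components. Since this special fibre has no loops, the toric part of the reduction of $\Jac(C)$ is trivial and $\Jac(C)$ has good reduction, by the remark after \Cref{def:almost-good}. The curve $C$ itself has bad reduction, since the stable special fibre is reducible, so $C$ has almost good reduction.

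One subtlety remains: stable reduction over an extension must descend, i.e.\ good reduction of the Jacobian requires inertia to act trivially on it. We argue as in \Cref{prop:pair}. The two positive-genus components above the even-order $V_0$ and $V_{2n}$ already occur with multiplicity one over $\Q_p^{\mathrm{unr}}$, so inertia acts trivially on them, possibly after the twist by $p$. We expect this last verification to be the main obstacle, together with the parity bookkeeping in Step 2.
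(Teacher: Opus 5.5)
\textbf{Verdict: there is a genuine gap, in Step 4.}

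Your Steps 1--3 are correct and agree with the paper. Your parity bookkeeping is in fact more explicit than the paper's. Writing $a_k=\mathrm{ord}_{V_k}\tilde f$, the conditions $\mathrm{div}(\tilde f)\cdot V_k=0$ give $a_k\equiv a_0+k \pmod 2$. So the evenness of the level $2n$ is exactly what lets a single twist make both $V_0$ and $V_{2n}$ even.

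The problem is that you never show the model is semistable over $\Z_p^{\mathrm{unr}}$. You only ``expect'' the stable fibre and allow a ramified quadratic extension. That then forces a descent statement (inertia acts trivially because the positive-genus components have multiplicity one), which you assert without proof and flag yourself as the main obstacle. As written, neither the shape of the stable fibre nor the good reduction of $\Jac(C)$ over $\Q_p$ is established. The phrase ``possibly after the twist by $p$'' at that point is also misplaced, since the twist was already used in Step 2 to make $V_0$ even.

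The missing idea, which is the whole content of the paper's proof, is Castelnuovo's contractibility criterion; it makes any base extension unnecessary. Let $\pi\colon\mathcal{C}\to\mathcal{E}$ be the double cover.
\begin{itemize}
\item If $W$ is the multiplicity-$2$ component above an odd-order $V_{2k+1}$, then $\pi^*V_{2k+1}=2W$. Hence $4W^2=2V_{2k+1}^2=-4$, so $W$ is a smooth rational $(-1)$-curve and can be blown down.
\item The component $W'=\pi^*V_{2k}$ above an interior even-order $V_{2k}$ has $W'^2=2V_{2k}^2=-4$. It meets each neighbouring $W$ exactly once, since $W'\cdot 2W=\pi^*V_{2k}\cdot\pi^*V_{2k+1}=2$. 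After blowing down both neighbours it becomes a $(-2)$-curve meeting its new neighbours transversally.
\end{itemize}
The result is a regular model over $\Z_p^{\mathrm{unr}}$ whose special fibre is reduced with normal crossings: the genus $2$ curve and the genus $1$ curve joined by a chain of $n-1$ rational $(-2)$-curves. Hence $C$ is already semistable over $\Q_p^{\mathrm{unr}}$. Contracting the $(-2)$-chain gives the stable fibre, a genus $2$ curve meeting a genus $1$ curve in one node. Since its dual graph is a tree, $\Jac(C)$ has good reduction, and no inertia or descent question remains.
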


\begin{proof}
After twisting if necessary, the special fibre of $\mathcal{C}$ becomes a genus 2 curve connected with a genus 1 curve through a chain of $\P^1$s which alternately have multiplicity 2 and 1.
The $\P^1$s with multiplicity 2 in the special fibre can be contracted by Castelnuovo's criterion, obtaining a regular semi-stable model, which gives the desired reduction.

Note that in the twisting that we just did, we used that $2n$ is even.
Indeed, if the three points reduce to the same point modulo an odd power of $p$, then the chain of $\P^1$s has even length.
This means that the cover is ramified at either the vertical genus 1 component or the final vertical genus 0 component with the three branch points.
In this case, twisting will not make the cover unramified, as it will only swap the components at which the cover is ramified with the ones at which it is unramified.
\end{proof}

\begin{proposition}\label{prop:quadruple}
Suppose that $E$ has good reduction at $p$, $D = P_1 + P_2 + P_3 + P_4$ and $D' = P_4 + Q$, where $P_1, \ldots, P_4, Q$ are defined over $\Q_p^\mathrm{unr}$.
Moreover, assume that $P_1, P_2, P_3, P_4$ reduce to the same point on $E$ modulo $p^n$, for some positive integer $n$, such that no two reduce to the same point modulo $p^{n+1}$.
\begin{itemize}
\item[(i)]
If $Q$ also reduces to that point modulo $p$, then, after a quadratic twist if necessary, $C$ has almost good reduction, with special fibre consisting of three genus 1 curves in a chain.
\item[(ii)]
If $Q$ reduces to a different point modulo $p$, then, after a quadratic twist if necessary, $C$ has stable reduction, with special fibre consisting of two genus 1 curves intersecting twice.
\end{itemize}
\end{proposition}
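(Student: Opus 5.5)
We follow the approach of Propositions \ref{prop:pair} and \ref{prop:three-plus-one}: build the regular model $\mathcal{E}$ separating the points of $D$ and $D'$, compute the vertical part of $\mathrm{div}(\tilde f)$ from the intersection conditions $\mathrm{div}(\tilde f)\cdot V=0$, read off the parities, and form the Kausz cover $\mathcal{C}$.

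\textbf{Building $\mathcal{E}$.} Start from a smooth model of $E$ over $\Zpunr$ and blow up the common reduction $\bar P$ of $P_1,\dots,P_4$ a total of $n$ times. This gives the genus 1 component $E_0$ followed by a chain of $\P^1$'s $V_1,\dots,V_n$. The points $P_1,\dots,P_4$ specialise to four distinct points of the last component $V_n$, away from the node.
\begin{itemize}
\item In case (ii), $Q$ specialises to a point of $E_0$.
\item In case (i), $Q$ also reduces to $\bar P$ modulo $p$. Let $m\ge 1$ be the largest exponent with $Q\equiv P_i \bmod p^m$. If $m<n$, then $Q$ lands on $V_m$ off the chain nodes. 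If $m\ge n$, then $Q$ lies over a point of $V_n$; we may have to blow up further there, but $Q$ is not a branch point, so we expect this to change nothing essential.
\end{itemize}
Here $\mathrm{div}(\tilde f)$ has horizontal part $P_1+P_2+P_3-P_4-2Q$. The branch points are $P_1,\dots,P_4$, while $D_{\mathrm{odd}}$ on the horizontal part is $P_1+P_2+P_3+P_4$.

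\textbf{Vertical orders.} Write $a_0,a_1,\dots,a_n$ for the orders of $\tilde f$ along $E_0,V_1,\dots,V_n$. Each $V_i$ with $i<n$ is a $(-2)$-curve meeting its two neighbours, and $V_n$ meets only $V_{n-1}$. Solving $\mathrm{div}(\tilde f)\cdot V_i=0$ along the chain gives:
\begin{itemize}
\item In case (ii), the horizontal part meets $V_n$ with degree $3-1=2$, and nothing else meets the chain. So $a_i-a_{i-1}$ is constant, equal to $-2$, along the chain. Hence all $a_i$ have the same parity as $a_0$, and after twisting by $p$ they are all even.
\item In case (i), the horizontal part meets $V_n$ in degree $2$ and meets $V_m$ (or the deeper component carrying $Q$) in degree $-2$. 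The slopes $a_i-a_{i-1}$ therefore still change only by even amounts, so again all orders have the same parity and a twist makes them all even.
\end{itemize}
So in both cases $D_{\mathrm{odd}}$ has no vertical components and $\mathcal{C}$ is a regular semistable model.

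\textbf{Reading off the special fibre.}
\begin{itemize}
\item Over $V_n$ there are four branch points, so the double cover is a genus 1 curve.
\item Over $E_0$ the cover is unramified; it is étale of degree $2$ over a genus 1 curve, hence genus 1 if connected.
\item Over each $V_i$ with $i<n$ there are no branch points, so the cover is two disjoint $\P^1$'s.
\end{itemize}
The key question is whether the cover over $E_0$ and over the $V_i$ splits. This is decided by the restriction of $\mathcal{L}=\mathcal{O}_{\mathcal{E}}(B)$ to each component, equivalently by whether $\tilde f$, suitably normalised, is a square there.

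In case (ii), $\tilde f|_{E_0}$ has divisor $-2\bar Q+2\bar P$ (up to the contribution of the node), and $\bar Q\neq\bar P$. Hence $\mathcal{L}|_{E_0}$ is the nontrivial $2$-torsion class given by $\bar P-\bar Q$, provided this class is nonzero in $\mathrm{Pic}^0$ of the special fibre of $E$. The cover over $E_0$ is then a connected genus 1 curve. The chain is covered by two chains of $\P^1$'s, giving two genus 1 curves joined along two chains; after contracting $(-2)$-curves these are two genus 1 curves meeting twice. As a check, Riemann--Hurwitz gives arithmetic genus $1+1+1=3$, with one loop, consistent with $N_C=1$.

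In case (i), the divisor on $E_0$ becomes principal, since $Q$ and $P_4$ reduce to the same point, so the cover of $E_0$ splits into two genus 1 curves. On the chain, the cover is connected up to the component carrying $Q$ and split beyond it, or vice versa. The result is a tree: two genus 1 curves over $E_0$ both attached by chains to the genus 1 curve over $V_n$. Contracting gives three genus 1 curves in a chain. The dual graph has no loops, so $C$ has almost good reduction.

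\textbf{Main obstacle.} The step I expect to be hardest is making the connectedness and splitting argument rigorous. This means tracking the restriction of $B$ to each component precisely, including node contributions. In case (ii) it also means confirming that the relevant $2$-torsion class on the reduction of $E$ is nontrivial; if it were trivial we would be in a different configuration. The fallback is to compute $\tilde f$ locally in coordinates around $\bar P$ after each blow-up, in the style of Kausz.
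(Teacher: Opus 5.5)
Your route is the paper's. You twist so that the vertical part of $\mathrm{div}(\tilde f)$ is even, observe that everything hinges on whether the \'etale double cover of the genus~1 component $E_0$ splits, decide this by restricting $\mathrm{div}(\tilde f)$ to $E_0$, and contract the chains of $\P^1$'s. Your explicit computation of the vertical orders fills in what the paper leaves implicit. (The slope is $a_i-a_{i-1}=+2$ rather than $-2$, but only parity matters.) The argument goes through once two points are repaired.

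\textbf{Case (ii).} Do not leave it conditional on $\bar P-\bar Q$ being nontrivial in $\mathrm{Pic}^0$. A rational function with divisor $\bar P-\bar Q$ would give a degree-1 map from the genus~1 curve $E_0$ to $\P^1$, so this class is automatically nonzero. The step you call the main obstacle is therefore a one-liner, and there is no other configuration to worry about. Nor is there any ambiguity ``up to the contribution of the node'': $\mathrm{div}(\tilde f)\cdot E_0=0$ forces $a_1-a_0=2$. So $p^{-a_0}\tilde f|_{E_0}$ has divisor exactly $2\bar P-2\bar Q$, hence is not a square, and the cover of $E_0$ is connected.

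\textbf{Case (i).} Your claim that the cover of the chain is ``connected up to the component carrying $Q$ and split beyond it'' is false, and it contradicts your own earlier bullet. Every $V_i$ with $i<n$, including the one through $Q$, carries no branch point, and $\P^1$ has no connected \'etale double cover, so each is covered by two disjoint $\P^1$'s. The point $Q$ has even coefficient in $\mathrm{div}(\tilde f)$ and merely changes the slope of the vertical orders by $2$. Only $V_n$ has a connected cover. Together with the splitting over $E_0$, this gives exactly the chain of three genus~1 curves $E_0'$, $\tilde V_n$, $E_0''$ (with $\tilde V_n$ in the middle) that you end up with.

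Relatedly, when $m\ge n$ no further blow-up at $Q$ is needed. Kausz's construction only requires $D_{\mathrm{odd}}=P_1+\dots+P_4$ to be regular, which it already is on $V_n$. In fact, blowing up a point where $Q$ meets some $P_i$ would create a vertical component of odd order, so ``nothing essential changes'' is not the right justification.
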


\begin{proof}
In both cases, after a quadratic twist if necessary, the cover $\mathcal{C} \to \mathcal{E}$ does not have any vertical components in its branch divisor.
It remains to understand the cover above the vertical genus 1 component.
Since the cover has no ramification points on that component, there are two possibilities: a genus 1 curve mapping to the genus 1 curve through a 2-isogeny, or two disjoint copies of the genus 1 curve.
To distinguish these two cases we consider the restriction of the divisor $\mathrm{div}(\tilde{f})$ to the vertical genus 1 component.

If $Q$ reduces to the same point as $P_1,P_2,P_3,P_4$, then the restriction of $\mathrm{div}(\tilde{f})$ to the vertical genus 1 component is 0, and the cover is locally given by adjoining the square root of a constant function, i.e., the cover consists of two disjoint genus 1 curves.

If $Q$ reduces to a different point, then the restriction of $\mathrm{div}(\tilde{f})$ to the vertical genus 1 component is $2\overline{Q} - 2\overline{P_1}$, where $\overline{P_1}$ is the intersection point with the chain of $\P^1$s leading to the branch points.
In this case $\tilde{f}$ does not restrict to a constant function on the vertical genus 1 component, and the cover is a 2-isogeny.

In both cases, the chain of $\P^1$s can be contracted to obtain the desired stable reduction.
\end{proof}

\begin{proposition}\label{prop:multiplicative}
Suppose that $E$ has multiplicative reduction at $p$.
Let $\mathcal{E}$ be its minimal regular (semi-stable) model.
Suppose that $D = P_1 + P_2 + P_3 + P_4$ and $D' = P_4 + Q$, with $P_1, \ldots, P_4, Q$ defined over $\Q_p^{\mathrm{unr}}$.
Moreover, assume that $P_1, P_2, P_3, P_4$ reduce to distinct smooth points on the same irreducible component of $\mathcal{E}_p$.
\begin{itemize}
\item[(i)] If $Q$ reduces to a smooth point on the same irreducible component of $\mathcal{E}_p$, then, after a quadratic twist if necessary, $C$ has stable reduction, with special fibre consisting of a genus 1 curve with two self-intersections.
\item[(ii)] If not, then, after a quadratic twist if necessary, $C$ has stable reduction, with special fibre consisting of a genus 2 curve with a self-intersection.
\end{itemize}
\end{proposition}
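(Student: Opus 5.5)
We follow the strategy of \Cref{prop:quadruple}. We work with the minimal regular model $\mathcal{E}$ over $\Zpunr$, whose special fibre $\mathcal{E}_p$ is a cycle of $\P^1$s (an $n$-gon, with $n = N$ the length of the cycle). Let $\Gamma_0$ denote the component containing $\overline{P_1},\dots,\overline{P_4}$.

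\textbf{Step 1: no blow-ups and no odd vertical components.} The points of $D$ reduce to distinct smooth points of $\mathcal{E}_p$. In case (i), $Q$ is also a smooth point of $\Gamma_0$; it is distinct from the $\overline{P_i}$, or else we blow up as in the previous propositions. So no blow-ups are needed to separate the support of $D$ and $D'$, apart possibly from $\overline{Q}$ in case (i).

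We then compute the vertical part of $\mathrm{div}(\tilde f)$ from the conditions $\mathrm{div}(\tilde f)\cdot V=0$ for all vertical $V$.
\begin{itemize}
\item In case (i), the horizontal part $P_1+P_2+P_3-P_4-2Q$ meets $\Gamma_0$ with total degree $0$ and every other component with degree $0$. Hence the vertical part is a multiple of $\mathcal{E}_p$.
\item In case (ii), $Q$ reduces to another component $\Gamma_j$ of the cycle. The horizontal part has degree $2$ on $\Gamma_0$ and $-2$ on $\Gamma_j$. The vertical part $\sum a_i\Gamma_i$ must solve a Laplacian equation on the cycle with right-hand side $(-2,+2)$, and since the right-hand side is divisible by $2$, all $a_i$ can be chosen even up to adding $\mathcal{E}_p$. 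More precisely, the solution is piecewise linear along the two arcs, with slopes proportional to the arc lengths, so its integrality and parity need checking. This parity check is the step I expect to be the main obstacle. If it fails, one would pass to a ramified quadratic extension, which would contradict the claimed conductor, so I expect it to succeed after dividing by $2$.
\end{itemize}
After twisting by $p$ if needed, $D_{\mathrm{odd}}$ has no vertical components, so the cover $\mathcal{C}$ built from $\mathcal{L}=\mathcal{O}_{\mathcal{E}}(B)$ is regular and semi-stable.

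\textbf{Step 2: identify the special fibre.} Over $\Gamma_0$ the cover is branched at the four points $\overline{P_i}$, so the component of $\mathcal{C}_p$ above $\Gamma_0$ is a double cover of $\P^1$ branched at four points, i.e.\ of genus 1. Over each of the remaining components $\Gamma_i$ there is no branching, so above $\Gamma_i$ we get either two copies of $\P^1$ or a double cover ramified at the nodes. To decide which, we restrict $\tilde f$ to the chain $\Gamma_1\cup\dots\cup\Gamma_{N-1}$ and ask whether it is a square there. Equivalently, we compute the monodromy of the cover around the cycle of $\mathcal{E}_p$: we follow the square root of $\tilde f$ around the loop and check whether it returns to itself.
\begin{itemize}
\item In case (i), the restriction of $\mathrm{div}(\tilde f)$ to the rest of the cycle is trivial. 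So $\tilde f$ restricts to a constant there, which is a square after passing to $\Zpunr$. Each $\Gamma_i$ with $i\neq 0$ therefore lifts to two disjoint copies, producing two chains of $\P^1$s attached to the genus 1 component. These contract to two self-intersections, giving arithmetic genus $1+2=3$ as required.
\item In case (ii), $\mathrm{div}(\tilde f)$ restricted to $\Gamma_j$ is $-2\overline{Q}$ plus intersection terms. This forces the monodromy to be nontrivial, so the preimage of the whole cycle is connected. Above the cycle we then have a double cover of a cycle, i.e.\ a single cycle of twice the length, whose genus 1 part over $\Gamma_0$ must instead be genus 2. By Riemann--Hurwitz, the component over $\Gamma_0$ becomes genus 2 precisely when the two sheets are glued at both nodes of $\Gamma_0$. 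After contracting the $\P^1$-chains, the stable model is then a genus 2 curve with one self-intersection, and the arithmetic genus is again $2+1=3$.
\end{itemize}

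\textbf{Step 3: compare with the table.} We finish by checking the conductor values in \Cref{table:reductions} using toric ranks. In case (i) the toric rank is $2$, so $N_C=2$ and $N_A=2-1=1$. In case (ii) the toric rank is $1$, so $N_C=1$ and $N_A=0$.
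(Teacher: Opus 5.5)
Your case (i) is fine and essentially matches the paper. Case (ii) has a genuine gap, located exactly at the parity check you flagged, and your guess about how it resolves is wrong.

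\textbf{The parity claim fails.} The claim that the vertical coefficients can be taken even ``since the right-hand side is divisible by $2$'' is false: for an $n$-gon with $n$ even, the cycle Laplacian mod $2$ has the nontrivial kernel $a_k\equiv k$. Concretely, $D\sim 2D'$ forces $Q$ onto the opposite component $C_{n/2}$. Equivalently, your piecewise-linear solution is integral only then, since the two arcs carry flows \emph{inversely} proportional to their lengths, summing to $2$. Normalised to vanish on $\Gamma_0=C_n$, the solution is $a_k=\pm\min(k,n-k)$, with slope $1$ on both arcs. So the odd-indexed components $C_1,C_3,\dots,C_{n-1}$ have odd order. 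Twisting by $p$ shifts every coefficient by $1$, so it can only swap which parity class is odd, not remove the alternation. This does not force a ramified extension, contrary to what you assume; it is the actual mechanism of the proposition. Because $C_1$ and $C_{n-1}$ lie in $D_{\mathrm{odd}}$, the two nodes of $C_n$ are branch points. The cover of $C_n$ is therefore branched at six points and has genus $2$. Over each odd-indexed component one gets a $\P^1$ of multiplicity $2$ meeting two reduced components once each, so $2F^2+2=0$ and $F$ is a $(-1)$-curve. These are contracted by Castelnuovo's criterion over $\Zpunr$, exactly as in \Cref{prop:three-plus-one}. Over each even-indexed $C_{2k}\neq C_n$ the cover is branched at the two nodes, giving a reduced $\P^1$. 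After contraction, these even components form the single loop through the genus $2$ curve.

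\textbf{Your Step 2 cannot repair this.} Having asserted that $D_{\mathrm{odd}}$ has no vertical components, the cover over $\Gamma_0$ is branched at only the four $\overline{P_i}$. Riemann--Hurwitz then gives genus $1$, and the \'etale double cover of the remaining (simply connected) chain splits. That reproduces precisely the case (i) configuration with two loops and toric rank $2$. The ``nontrivial monodromy around the cycle'' argument and the assertion that the genus $1$ part ``must instead be genus $2$'' do not follow from anything you computed. The genus jump comes only from the nodes of $\Gamma_0$ becoming branch points, which requires the odd vertical coefficients you ruled out.
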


\begin{proof}
The special fibre of $\mathcal{E}$ is an $n$-gon for some positive integer $n$, with components $C_1, \ldots, C_n$ in that order.
Without loss of generality, we assume that the reductions of $P_1, \ldots, P_4$ lie on $C_n$.
Note that the reduction of $Q$ is a smooth point, because $Q$ is defined over $\Q_p^{\mathrm{unr}}$.
Because $D - 2D' = 0$ on $E$, this implies that $Q$ can only be on the same component as $P_1, \ldots, P_4$, or on the exact opposite component $C_{n/2}$ in case $n$ is even.
In particular, if $n$ is odd, we are always in case (i).

In case (i), we have that the vertical part of $\mathrm{div}(\tilde{f})$ is $0 + m \cdot \sum_{i=1}^n C_i$ for some~$m \in \Z$.
Multiplying $f$ by $p^{-m}$, twisting the cover if necessary, we may assume that $\mathrm{div}(\tilde{f})$ has no vertical part.
Then the cover is ramified above $C_n$ with four ramification points, and unramified above all other components $C_i$.
Hence, $\mathcal{C}_p$ consists of a genus 1 curve with 2 loops of $\P^1$s, and there is stable reduction as claimed.

In case (ii), we have that the vertical part of $\mathrm{div}(\tilde{f})$ is $$C_1 + 2C_2 + 3C_3 + \ldots + \tfrac{n}{2}C_{\frac{n}{2}} + (\tfrac{n}{2} - 1) C_{\frac{n}{2}+1} + \ldots + 2C_{n-2} + C_{n-1} + m \cdot \sum_{i=1}^n C_i,$$
for some $m \in \Z$, which we again can assume to be 0, twisting the cover if necessary.
Now the cover above the components is:
\begin{itemize}
\item ramified at six points on $C_n$ (the points $P_1, P_2, P_3, P_4$ and the intersection points with the adjacent components), giving rise to a genus 2 curve in $\mathcal{C}_p$;
\item fully ramified above the odd-numbered components $C_{2k+1}$, giving rise to a genus 0 component with multiplicity 2 on $\mathcal{C}_p$ above each such component;
\item ramified at two points on the other even-numbered components $C_{2k}$, giving rise to a genus 0 component with multiplicity 1 on $\mathcal{C}_p$  above each such component.
\end{itemize}

Just as in the proof of \Cref{prop:three-plus-one}, the components of multiplicity 2 can be contracted due to Castelnuovo's criterion.
Hence, we get a genus 2 curve with a loop of $\P^1$s, and there is stable reduction as claimed.
\end{proof}

\subsection{Local data at the even prime}

Since the cover is of degree 2, the prime $p = 2$ behaves differently.
Recall that an elliptic curve given by $y^2 = x^3 - n$ has good reduction if and only if $v_2(n) \equiv 4 \bmod 6$ and $\frac{n}{2^{v_2(n)}} \equiv 3 \bmod 4$, see for example \cite[Table~1]{BGR}.
Indeed, in this case, changing coordinates if necessary, we may assume that $v_2(n) = -2$ and $n \equiv \tfrac34 \bmod 1$.
Replacing $y$ by $y + \tfrac12$ then yields an integral Weierstra\ss\ model with good reduction at $2$.
The next proposition is inspired by the fact that the four ramification points in this example have $2$-adic distance $2^{-\frac23}$.

\begin{proposition}\label{prop:evenprime}
Suppose that $E$ has good reduction at $2$.
Let $\mathcal{E}$ be a good Weierstra\ss\ model of $E$ over $\Z_2$ with projective coordinates $x,y,z$.
Suppose $D = P_1 + P_2 + P_3 + P_4$ with $P_1, P_2, P_3$ defined over $\overline{\Q_2}$, and fix $P_4 = 0_E$ and $D' = 2 \cdot 0_E$.
Let $v_2$ be the valuation on $\overline{\Q_2}$ normalised so that $v_2(2) = 1$.
Assume that $P_i = (x_i : 1 : z_i)$ with $v_2(x_i), v_2(z_i) > 0$, for all $i = 1,2,3$, and that the $2$-adic distances between the points are $2^{-\frac23}$, i.e.\ $\min(v_2(x_i - x_j), v_2(z_i - z_j)) = \tfrac23$ for all~$i \neq j$.
Then, after twisting if necessary, the curve $C$ has good reduction at $2$.
\end{proposition}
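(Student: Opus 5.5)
We would write down the cover explicitly and then find a smooth model over a suitable extension of $\Z_2$. Since $P_4 = 0_E$ and $D' = 2\cdot 0_E$, the function $f$ has divisor $P_1+P_2+P_3 - 3\cdot 0_E$. It therefore lies in the Riemann--Roch space $L(3\cdot 0_E)$ and has the shape
\[ f = a\,y + b\,x + c\,z \]
(in affine terms $f = ay+bx+c$, with $z$ the coordinate vanishing at $0_E$), chosen to vanish at $P_1,P_2,P_3$. The curve $C$ is then $w^2 = f$ on $E$. The point $(0:1:0)$ is $0_E$, and the $P_i$ all reduce to $0_E$ modulo the maximal ideal, since $v_2(x_i),v_2(z_i)>0$. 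So we work in the local coordinates $t = x/y$, $s = z/y$ near $0_E$. In these coordinates $s = t^3 + \ldots$ is a power series with $\Z_2$-coefficients, coming from the good Weierstra\ss\ model. Dehomogenising, $f/y = a + bt + cs(t)$, up to the unit $y$ away from $0_E$ and the twist.

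The plan is to rescale to the scale of the branch points. Over $K = \Q_2^{\mathrm{unr}}(2^{1/3})$ with a further finite extension as needed, put $t = \pi^2 u$ with $v(\pi^2) = \tfrac23$. This is the residue disc where the three branch points $P_1,P_2,P_3$ become distinct after reduction; their $t$-coordinates have pairwise differences of valuation exactly $\tfrac23$. We may also translate so that one of them sits at $u=0$. In the $u$-coordinate, $f$ becomes, up to a constant, a cubic-like power series $g(u)$ whose reduction has three distinct roots $\bar u_1,\bar u_2,\bar u_3$.

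The equation $w^2 = g(u)$ has bad-looking reduction in characteristic $2$ (purely inseparable). As in the standard treatment of $p=2$ for hyperelliptic curves, we would complete the square: substitute $w = \alpha(u) + 2^{k}\,w'$ with $\alpha$ a polynomial approximating $\sqrt{g}$. The valuations are designed so that this works. The $2$-adic distance $2^{-2/3}$ is exactly the one appearing for $y^2 = x^3 - n$ in the motivating example, where the shift $y \mapsto y+\tfrac12$ produces an Artin--Schreier equation. Concretely, we expect to obtain a reduction of the form $w'^2 + h(u)w' = k(u)$ with $\deg h = $ small and $\bar h$ nonvanishing on the appropriate set. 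This should give an ordinary or supersingular smooth double cover of the reduced elliptic curve, of genus $3$ by Riemann--Hurwitz (Artin--Schreier version).

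To conclude, we would check that the resulting model glues with the smooth model of $E$ away from the disc. Outside the disc, $f$ has no zeros and $2$-adically constant leading behaviour, so the cover is étale there after the twist by a suitable $d\in\Q^*$, as in Remark~\ref{rmk:non-unique}. That twist is the ``twisting if necessary''. The special fibre is then a smooth genus $3$ curve, so $C$ has potentially good reduction. Finally, we would argue that twisting makes it good over $\Q_2$ itself: the inertia action on the smooth special fibre should be killed by a quadratic twist, using that the configuration is Galois-stable. \textbf{The main obstacle} is the completing-the-square step: finding $\alpha$ so that the equation becomes integral with a nonzero Artin--Schreier term, and verifying that the hypothesis $\min(v_2(x_i-x_j),v_2(z_i-z_j)) = \tfrac23$ forces exactly the right valuations. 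I would try first to reduce to the model case $y^2=x^3-n$ by a change of coordinates on $E$, and compute there explicitly.
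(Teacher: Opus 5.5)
There is a genuine gap, and it sits exactly where you put your ``main obstacle''.

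\textbf{The rescaled disc is the wrong scale.} Rescaling to the disc $t=\pi^2u$ with $v_2(\pi^2)=\tfrac23$ does not lead to good reduction at $2$, and the Artin--Schreier equation you expect there does not exist. Normalise $f=y/z+c_1x/z+c_2$ (your $a=1$); as explained below, $v_2(c_2)=-2$ and $v_2(c_1)\ge 0$. Near $0_E$ one has $f=(1+c_1t+c_2s)/s$ with $s=t^3(1+O(t))$. So on your disc $\pi^6f$ reduces to $u^{-3}+\bar\gamma$, where $\bar\gamma$ is the reduction of the unit $c_2\pi^6$. Since $u^{-3}$ is not a square in characteristic $2$, you cannot complete the square there. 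The reduction of the cover over that disc is the purely inseparable extension $\overline{\F}_2(u^{1/2})/\overline{\F}_2(u)$, which is a rational, contractible piece. In any case, a cover of the $u$-line cannot be ``a smooth double cover of the reduced elliptic curve''. In the good model, all four branch points $P_1,P_2,P_3,0_E$ collapse to $\bar 0_E$. The special fibre is then a cover of $\bar E$ that is wildly ramified at that single point. Separating the branch points is the odd-$p$ strategy and is not what happens at $p=2$.

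\textbf{The gluing step fails for the same reason.} In residue characteristic $2$, $w^2=f$ with $f$ a unit has inseparable reduction. What is actually needed is that, up to a constant, $f\equiv 1\pmod 4$ on the affine chart of $\mathcal{E}$. This constrains the non-constant part of $f$, which no twist by $d\in\Q^*$ can touch, so it must come from the hypothesis. On the annulus $2^{-2/3}<|t|<1$ the leading term of $f$ is indeed $c_2$, but $f/c_2-1$ has valuation $2-3v_2(t)\in(0,2)$. So the cover is not \'etale there either; that annulus belongs to the wild ramification at $\bar 0_E$.

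\textbf{The missing idea} is to read this congruence off the non-integrality of $v_2(x_i)$.
\begin{itemize}
\item The Weierstra\ss{} equation with $y=1$ gives $v_2(z_i)=3v_2(x_i)$. The distance from $P_i$ to $0_E$ is also $2^{-2/3}$, since $P_1+P_2+P_3=0$ in the formal group. Hence $v_2(x_i)=\tfrac23$ and $v_2(z_i)=2$.
\item In $0=f(P_i)$ the three terms have valuations $-2$, $v_2(c_1)-\tfrac43$ and $v_2(c_2)$. The non-integral middle one cannot be minimal, which forces $v_2(c_2)=-2$ and $v_2(c_1)\ge0$.
\item Twisting by $-1$ if needed gives $c_2\in\tfrac14+\Z_2$. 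The substitution $w\mapsto w+\tfrac12$ then yields $w^2+w=y/z+c_1x/z+(c_2-\tfrac14)$.
\item This model is integral away from $0_E$ and has a pole of order $3$ at $0_E$. It therefore reduces to an Artin--Schreier cover of $\bar E$ of genus $3$.
\end{itemize}

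\textbf{Other steps.} All of this happens over $\Z_2$, so no cubic extension is needed and your descent step becomes moot. As written, that step is unjustified anyway, since a quadratic twist only removes inertia acting through the covering involution. Finally, reducing to $y^2=x^3-n$ is not available in general: that model only covers $j(E)=0$.
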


\begin{proof}
The 2-adic distance between $P_i$ and $P_4$ is $2^{-\frac23}$. By definition, this means that $\min(v_2(x_i), v_2(z_i)) = \frac23$.
Let 
$$y^2z + a_1 xyz + a_3yz^2 = x^3 + a_2x^2z + a_4xz^2 + a_6z^3$$
be the integral Weierstra\ss\ model $\mathcal{E}$.
We consider the 2-adic valuations of the separate terms for the points $P_1, P_2, P_3$.
By the non-archimedean property, we find that $v_2(z_i) = v_2(x_i^3)$.
In particular, $v_2(z_i) = 2$ and $v_2(x_i) = \tfrac23$.

The function $f$ whose square root we adjoin to get the genus 3 curve lies in the Riemann-Roch space $\mathcal{L}(3 \cdot 0_E) = \langle 1, \frac{x}{z}, \frac{y}{z} \rangle$.
We normalise $f$ so that the coefficient of $\frac{y}{z}$ is 1, i.e.\ $f = \frac{y}{z} + c_1 \frac{x}{z} + c_2$.
Note that $f$ has zeros at $P_1, P_2, P_3$.
As $v_2(1) = 0$, $v_2(\frac{x_i}{z_i}) = -\tfrac43$, and $v_2(\frac{y_i}{z_i}) = - 2$, the term $c_1 \frac{x_i}{z_i}$ is the only term whose valuation is not an integer. We therefore must have~$-2=v_2(\frac{y_i}{z_i}) = v_2(c_2)$, and also $v_2(c_1 \frac{x_i}{z_i}) > -2$, that is, $v_2(c_1) \geq 0$.
In particular,~$c_2 \in \tfrac14 + \Z$ or $c_2 \in \tfrac34 + \Z$.
In the latter case, we multiply $f$ by $-1$ to obtain $c_2 \in \tfrac14 + \Z$.

Now the genus 3 curve $C$ obtained by adjoining a square root of $f$, after twisting if necessary, can be put into a long form $w^2 + w + \tfrac14 = f$, and by subtracting $\tfrac14$ on both sides we obtain a model with good reduction at 2.
\end{proof}

\subsection{Conductor exponent of the Prym}

By \Cref{remark:conductor-Prym}, the different situations that we studied above have immediate consequences for the conductor of the Prym variety associated to the cover $C \to E$.

\begin{corollary}
The conductor exponents presented in \Cref{table:reductions} are correct.
\end{corollary}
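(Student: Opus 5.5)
The plan is to verify the table row by row: compute $N_E$, $N_C$ and $N_A$ for each configuration and use the relation $N_A = N_C - N_E$. Here $N_C$ is the conductor exponent of $\Jac(C)$. This relation comes from \Cref{remark:conductor-Prym}, since $\Jac(C)$ is isogenous to $E\times A$ and conductors are multiplicative in isogeny-decompositions.

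In every row the relevant reduction is tame and semistable. For $p>3$ tameness is \Cref{prop:tame-primes}. More directly, each of \Cref{prop:everything-good,prop:pair,prop:three-plus-one,prop:quadruple,prop:multiplicative} produces a semistable model of $C$ over $\Z_p^{\mathrm{unr}}$, after a twist, without further ramified extension. So the wild part of the conductor vanishes. The conductor exponent of $\Jac(C)$ then equals the toric rank of its Néron model special fibre. That rank is the first Betti number of the dual graph of the stable special fibre $\mathcal{C}_p$, i.e.\ the number of independent loops.

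The same applies to $E$. We have $N_E=0$ for good reduction, used in the first five rows. We have $N_E=1$ for multiplicative reduction, used in the last two rows. These hypotheses are built into the propositions.

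It remains to read off the loops of the dual graph from each proposition:
\begin{itemize}
\item \Cref{prop:everything-good}: smooth genus 3, so $N_C=0$.
\item \Cref{prop:pair}: genus 2 with one loop, so $N_C=1$.
\item \Cref{prop:three-plus-one}: genus 2 and genus 1 meeting in a node, a tree, so $N_C=0$.
\item \Cref{prop:quadruple}(i): a chain of three genus 1 curves, a tree, so $N_C=0$.
\item \Cref{prop:quadruple}(ii): two genus 1 curves meeting twice, one cycle, so $N_C=1$.
\item \Cref{prop:multiplicative}(i): genus 1 with two loops, so $N_C=2$.
\item \Cref{prop:multiplicative}(ii): genus 2 with one loop, so $N_C=1$.
\end{itemize}
In each case the arithmetic genera sum to $3$, consistent with $g(C)=3$. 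Subtracting $N_E$ gives the $N_A$ column: $0,1,0,0,1,1,0$.

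One point needs care. The table's $N_C$ refers to $C$ after the quadratic twist by $p$ that the propositions allow, and the Prym of the twisted cover is a twist of $A$. So the corollary must be read as a statement about that twisted cover. I expect the main obstacle to be the rows with almost good reduction or multiplicative reduction, where one must check that the model is genuinely semistable over $\Z_p^{\mathrm{unr}}$ after the contraction of multiplicity-2 components. Otherwise a ramified extension would be needed and the toric-rank formula would have to be replaced by an inertia-invariants computation. This semistability is exactly what the Castelnuovo contraction in the proofs of \Cref{prop:three-plus-one,prop:multiplicative} provides, so I would cite that and conclude.
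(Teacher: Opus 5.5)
Your proposal is correct and follows the paper's argument: the wild part of the conductor vanishes, so $N_C$ is the number of loops in the dual graph of the stable reduction, and $N_A=N_C-N_E$ by \Cref{remark:conductor-Prym}. The only difference is in how the wild part is disposed of. The paper notes that stable reduction is reached after at most a quadratic extension, which is tame since $p$ is odd (this happens in the ramified subcase of \Cref{prop:pair}). You instead deduce it directly from the propositions' conclusion that $C$ is semistable after twisting. Your row-by-row loop count and your remark that the table refers to the twisted cover are both accurate.
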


\begin{proof}
The conductor exponent $N_C$ of $C$ at $p$ is defined by the action of inertia on the Tate module, see for example \cite{BK94}.
In all cases considered in \Cref{table:reductions}, the stable reduction was obtained after at most a ramified extension of degree~2.
Therefore,~$N_C$ has no wild part, and we only need to determine its tame part.
Since $C$ has stable reduction in all cases, this translates into computing the dimension of the homology of the dual graph of~$\mathcal{C}_p$, i.e.\ the number of loops in~$\mathcal{C}_p$.
\end{proof}

In the case of \Cref{prop:evenprime}, where $p = 2$, the curve $C$ has good reduction at $p$ and hence the conductor exponents of $C$ and $A$ are 0.

\section{Computing the conductor of the Prym}
\label{sect:conductor}

The methods of the previous section allow us to control the primes of bad reduction for the Pryms we construct and give us some information about the conductor exponents at these primes, but not enough to determine the conductor of the Prym variety $A$ associated to a genus~3 double cover $\phi\colon C\to E$ of an elliptic curve $E$.
While in principle one can compute the conductor of $A$ as the quotient of the conductors of $\Jac(C)$ and $E$, this approach is impractical, as there are no efficient methods available for computing the conductor of the Jacobian of a general genus~3 curve over $\Q$.  Partial results are known in some special cases, such as when $C$ is a Picard curve \cite{BBW,BKSW}, a Ciani quartic \cite{BCLS}, or a hyperelliptic curve \cite{LR25}, but even in these special cases the available methods are not fast enough for our purposes.

We instead adopt the heuristic approach used for existing databases of genus~2 and genus~3 curves over $\Q$ of small conductor \cite{BSSVY,Sut19}, including the database of genus 2 curves found in the LMFDB. We now assume that the $L$-function $L(A,s)=\sum_{n\ge 1} a_n n^{-s}$ of our abelian surface $A/\Q$ satisfies the expected functional equation
\begin{equation}\label{eq:func}
\Lambda(A,s) = \varepsilon N^{1-s}\Lambda(A,2-s),
\end{equation}
where $\varepsilon=\pm 1$ is the root number and $\Lambda(A,s)=\Gamma_\mathbb{C}(s)^2L(A,s)$ is the completed $L$-function of $A$ (here $\Gamma_\mathbb{C}(s)\coloneqq 2(2\pi)^{-s}\Gamma(s)$ with $\Gamma(s)\coloneqq \int_0^\infty t^{s-1}e^{-t}dt$).
As explained in \cite[\S 5.2]{BSSVY}, given a purported conductor $N$, root number $\varepsilon$, and Dirichlet coefficients~$a_n$ for $n\le B= b\sqrt{N}$, where $b\ge 5$ is a fixed constant, there is an explicit numerical test that must be satisfied if \eqref{eq:func} holds and these purported values are all correct, and is unlikely to be satisfied whenever this is not the case.

Increasing $b$ strengthens this test, but also requires us to compute more Dirichlet coefficients; we used $b=12$ for our computations.
For the sake of efficiency we first perform computations at standard double precision (53 bits) and accept the possibility of rounding errors that may arise from the use of floating-point arithmetic. After the fact, we can use interval arithmetic as implemented in \cite{Johansson} to rigorously verify our computations, subject to our assumption that \eqref{eq:func} holds.  This assumption is implied by the conjectured modularity of~$A$, but this conjecture remains open, so even with interval arithmetic, all our conductor computations are conditional.

For any fixed $N$ there are only finitely many possibilities for $\varepsilon$ and the $a_n$ for $n\le B$.
If our numerical test of \eqref{eq:func} fails for all possibilities, then we know that $N$ is not the conductor of $A$ (under our assumption).  For any particular genus 3 double cover $C\to E$, the prime divisors of $N$ must divide the discriminant of $C$, and the results of Brumer and Kramer in \cite{BK94} imply that the conductor exponent $v_p(N)$ is bounded by $20,10,9,4$ for $p=2$, $p=3$, $p=5$, $p\ge 7$, respectively.
There are thus only finitely many possibilities for $N$ that can arise in any given $A$, and we can efficiently enumerate them.  Ruling out all but one possibility suffices to determine the conductor $N$, subject to our assumption that \eqref{eq:func} holds.  Ruling out all $N\le N_{\max}$, where $N_{\max}$ determines the conductors we consider ``small'', suffices to prove that $N > N_{\max}$, under the same assumption.  This is often much easier than determining the exact value of $N$.

Let us now sketch the algorithm.  We are given a genus 3 double cover $C\to E$, an upper bound $N_{\max}$ on the conductor $N$ of the associated Prym $A$, an upper bound $r_{\max}$ on $\rad(N)$ (the product of the prime divisors of $N$), and an upper bound $L_{\max}$ on the number of tests of the functional equation \eqref{eq:func} we are willing to apply.
Our goal is to compute $N$ or conclude that one of these bounds has been violated (the algorithm returns \texttt{failure} in the latter case).
\begin{enumerate}
\setlength{\itemsep}{4pt}
\item[1.] Compute the discriminant $\Delta$ of $C$ and factor it using trial division over primes $p\le \sqrt{N_{\max}}$. If this does not factor $\Delta$ up to a prime-power cofactor, return \texttt{failure}.
\item[2.] For each prime $p|\Delta$ compute lower and upper bounds on $v_p(N)$, taking into account $v_p(\Delta)$ and any applicable bounds from Table \ref{table:reductions}.
\item[3.] Compute the set $S$ of integers $N\le N_{\max}$ with prime factors $p|\Delta$ satisfying the $v_p(N)$ bounds from Step 2 with $\rad(N)\le r_{\max}$.
\item[4.] For each $N\in S$, compute the number $L_N$ of combinations of $\varepsilon$ and pairs $(a_p,a_{p^2})$ over $p|N$ that are compatible with $v_p(N)$, using the Weil bounds and constraints on degrees of bad Euler factors.\footnote{For example, \cite[Corollary II.2]{ST} implies that for $p>5$ we have $\deg L_p(T)= 4-v_p(N)$.}  Sort $S$ by $L_N$ and truncate if needed to make $\sum_{N\in S} L_N\le L_{\max}$.  Return \texttt{failure} if $S=\emptyset$.
\item[5.] Compute $a_p(A)=a_p(C)-a_p(E)$ for $p\nmid\Delta$ with $p\le B \coloneqq b \sqrt{N_{\max}}$.\\
Compute $a_{p^2}(A)=a_{p^2}(C)-a_p(C)a_p(E)+p$ for $p\nmid\Delta$ with $p\le \sqrt{B}$.
\item[6.] For each candidate conductor $N$ in the ordered set $S$, test \eqref{eq:func} using each of the possibilities for $\varepsilon=\pm 1$ and the pairs $(a_p,a_{p^2})$ over $p|N$ determined in Step 4. If the test passes for more than one choice, return \texttt{error}; if the test passes for exactly one choice, return $N$ and the values of $\varepsilon$ and the $(a_p,a_{p^2})$ pairs that worked; otherwise, proceed to the next $N\in S$.
\item[7.] Return \texttt{failure}.
\end{enumerate}

In our computations we used $r_{\max}\coloneqq 2^{10}$ and $N_{\max} \coloneqq 2^{20}$, yielding $B\approx 12288$, and found that $b=12$ was sufficient to ensure the algorithm never returned \texttt{error}.

For the computation in Step 5, for primes $p \le \sqrt{B} \approx 111$ we used na\"ive point-counting to compute $\#E(\F_p)$, $\#C(\F_p)$, and $\#C(\F_{p^2})$, from which we can derive $a_p(A)$ and $a_{p^2}(A)$.  For the remaining $p \le B \approx 12288$ we compute $a_p(A) = a_p(C)-a_p(E)$ by applying average polynomial-time algorithms to compute $a_p(C) \bmod p$ for $p\le B$ in $O(B\log^3 B)$ time (we used \texttt{smalljac} \cite{KS} to compute $a_p(E)$).  When $C$ is a hyperelliptic curve $y^2=f(x)$, we use the algorithm described in \cite{HS16,Sut20}, and when $C$ is a smooth plane quartic $f(x,y,z)=0$, we use the algorithm described in \cite{CHS}.  In the latter case we may exploit the fact that $C$ is a double cover of a genus 1 curve via \cite[Remark 5.26]{CHS}.
Note that for $p>64$ the Weil bounds imply $|a_p(A)|\le 4\sqrt{p} \le p/2$, so $a_p(A)$ is uniquely determined by $a_p(A)\bmod p$.

\begin{remark}
The asymptotic complexity of our implementation of Step 5 is $\tilde O(B^{3/2})$.  This is suboptimal, but for $\sqrt{B}\approx 111$ it is the most efficient approach in practice.  There is an asymptotically more efficient approach that uses a combination of the $L$-polynomial lifting algorithms described in \cite{Shi2,Shi3} to lift the $L$-polynomial of $A$ from $\Z/p\Z$ to $\Z$ in $O((\log p)^{2+o(1)})$ expected time, which yields an $\tilde O(B)$ Las Vegas algorithm to compute the $L$-polynomials $L_p(T)$ of $A$ for primes $p\le B$ of good reduction for $C$ that is much more efficient in practice than using Harvey's general-purpose average polynomial-time algorithm for arithmetic schemes \cite{Harvey}.
While we did not use this algorithm to compute conductors, we did use it to identify Prym varieties $A$ with $\mathrm{End}(A_{\Qbar})=\Z$ via Zywina's algorithm \cite{Zywina}, which requires full $L$-polynomials for more values of $p$ than we needed to compute conductors.
\end{remark}

\begin{remark}
Genus 3 curves $C/\Q$ that are not smooth plane quartics are always hyperelliptic over $\Qbar$ but need not admit a hyperelliptic model $y^2=f(x)$ over $\Q$.  There is an average polynomial-time algorithm that can be applied to such curves \cite{HMS}, but as explained in Remark~\ref{remark:hyperelliptic-twisting}, we can always realise a quadratic twist of our Prym using a hyperelliptic model, so we did not use this algorithm.
\end{remark}

By caching precomputed quadratic residues and roots of cubic polynomials over $\F_{p^2}$ for $p \le \sqrt{B}$ we are able to ensure that the na\"ive point-counting performed in Step 5 takes negligible time (a few milliseconds), and the speed of the average polynomial-time algorithms in \cite{CHS,HS16,Sut20} ensured that the total time for Step 5 was typically under 2 seconds.
In most cases the computation of the conductor was dominated by Step 6, since the number of combinations $L_N$ of $\varepsilon$ and pairs $(a_p,a_{p^2})$ for $p|N$ may be quite large.
Keeping $r_{\max}$ small helps to control this, but to ensure that the computations remain manageable we set $L_{\max} \coloneqq 2^{28}$, which forces the algorithm to terminate Step 6 if the number of tests exceeds $L_{\max}$.

With this constraint in place the average running time of the algorithm was around 15 seconds on the 2.45 GHz AMD Milan CPUs that we used.
In roughly 5/6 of the cases the $L_{\max}$ constraint prevented the algorithm from considering all possible $N\le N_{\max}$ with $\rad(N)\le r_{\max}$, so it is very likely that we missed many Pryms whose conductors satisfy our search parameters.  But adding the $L_{\max}$ constraint reduces the average running time of the algorithm by more than a factor of 10, allowing us to process many more Pryms.

\section{Constructing a database of Pryms of small conductor}\label{sect:database}

To construct genus 3 covers $\phi\colon C\to E$ that will produce a Prym variety $A$ of small conductor, we control the primes of bad reduction for both $E$ and $C$ via our selection of $E$, the ramification points of $\phi$, and the number field $K$ (of degree at most 4) over which these ramification points are defined.
We start by placing constraints on the bad primes we will allow.
In order to make the conductor computations manageable, we want the primes dividing the conductor to be fairly small (the larger the prime, the greater the number of possible Euler factors), and we also want the number of bad primes to be small.
We also want to constrain the archimedean valuation of the conductor, but this is actually less critical than controlling the number of possible bad Euler factors, which can lead to a combinatorial explosion in our heuristic algorithm for computing the conductor, as explained in the previous section.

For our database, which we do not claim to be complete in any way, we decided to impose a bound of $r_{\max}\coloneqq 2^{10}$ on the conductor radical $\rad(N)$ (the product of its prime divisors), which ensures that both the size and number of primes dividing the conductor are small.
Subject to this constraint, our goal is to search for Prym varieties with conductors $N\le  N_{\max} \coloneqq 2^{20}$, a bound we chose because it suffices to cover all abelian surfaces with good reduction away from $2$ (this choice turned out to be fortuitous, as explained in Section~\ref{sect:postscript}), but the bound on $N$ plays no role in the initial stage.

We begin by assembling a list of elliptic curves $E/\Q$ whose conductors have radical bounded by $2^{10}$ (this isn't strictly necessary, as primes dividing the conductor of $E$ need not divide the conductor of $A$, but they usually will).
Fortunately the LMFDB contains a large (but far from complete) set of such elliptic curves, including all $E/\Q$ with good reduction away from $2,3,5,7$ and all $E/\Q$ with conductor less than 500\,000.
For the number field $K$, we consider all number fields of degree up to 4 with $|\Delta_K|\le 2^{20}$, all of which are included in the LMFDB.

We then iterate over pairs $(E,K)$ and for each pair we compute the Mordell-Weil group of $E(K)$, which we require to be strictly larger than $E(\Q)$ whenever $K\ne \Q$ (otherwise there is no point in considering $K$).
For each pair we provisionally compute a set of generators for the Mordell-Weil groups $E(\Q)$ and $E(K)$ and then iterate over integer linear combinations of these generators to construct ramification points; this set of generators might not be complete because we impose a height bound to keep the computation feasible (indeed, no rigorous algorithm is known to compute $E(\Q)$, or even its rank).
We want to keep the (na\"ive) heights of the points we choose small, so we restrict the absolute values of the integer coefficients we use in our integer linear combinations to at most $5$ and reduce this bound as needed to keep the total number of integer linear combinations reasonably small (we imposed a bound of $3^9=19683$ on the total number of choices of ramification points to consider for each pair).

We then apply the following constraints.
\begin{itemize}
\item The ramification points $P_1,P_2,P_3,P_4$ must be distinct, and their sum must be divisible by $2$; in other words, $P_1+P_2+P_3+P_4=2Q$ for some $Q\in E(K)$. We now take $D' = [0_E] + [Q]$.
\item The product of the primes $p$ that divide $N_E$ or $\Delta_K$, or the norm of a prime of $K$ modulo which $P_1,P_2,P_3,P_4$ are not distinct must be at most $2^{10}$, as these primes will likely become primes of bad reduction for $C$.
\item The defining polynomial of the quadratic extension of the function field of $E$ obtained by adjoining the square root of the selected function $f$ with $\mathrm{div}(f)=[P_1]+[P_2]+[P_3]+[P_4]-2[Q]-2[0_E]$ must not be too large (we restricted its print length to 2000 characters).
\item The (not necessarily minimal) discriminant $\Delta(C)$ of the genus 3 cover $C/\Q$ can have at most one prime factor greater than $2^{10}$, with $|\Delta|\le 10^{5000}$.
\item The minimal discriminant $\Delta_{\min}(C)$ can contain at most 7 prime divisors, and the product of the primes $p$ with $v_p(\Delta)\in [1,11]$ cannot exceed $2^{10}$.
\end{itemize}

Note that we allow $\Delta_{\min}(C)$ to have radical greater than $2^{10}$ provided that the extra primes occur with exponent at least 12.
The motivation for this is that the $C$ we construct will often have one or more primes of almost good reduction (see Definition~\ref{def:almost-good}) that are bad for $C$ but good for $\Jac(C)$; such primes will not divide the conductor of $A$.
One does not encounter primes of almost good reduction in the current database of genus 2 curves in the LMFDB, but that is only because the current database was constructed using a bound of $2^{20}$ on the minimal discriminant.
This makes it nearly impossible for almost good primes to appear, since such primes must have discriminant exponent at least 12.
But in the new database of more than 6 million genus 2 curves of small conductor (but not necessarily small discriminant) that is now available in the alpha version of the LMFDB \cite{alpha}, primes of almost good reduction are quite common and appear in almost half the curves.

\begin{remark}\label{rmk:2-torsion}
In addition to enumerating ramification points, we also iterate over non-trivial points in $E(\Q)[2]$, which we may add to the point $Q$ that is used to construct the genus 3 cover.
If $T$ is a 2-torsion point, then $Q+T$ also satisfies all the requirements listed above, but it may produce a different cover and a non-isogenous Prym; see also \Cref{rmk:non-unique}.
\end{remark}

The first phase of the computation iterates over pairs $(E,K)$ and integer linear combinations of $K$-rational points to find genus 3 covers $C\to E$ that satisfy the constraints above, and computes an isogeny invariant of each Prym using the \emph{trace hash} defined in \cite[\S 5.2]{BSSVY}. This is a linear combination of $a_p(A)$ values for $p\in [2^{12},2^{13}]$ computed in the finite field with $2^{61}-1$ elements using coefficients derived from the digits of $\pi$.
This range of $p$ conveniently avoids the primes of bad reduction, and because the trace hash is a linear function, we can compute the trace hash for $A$ as the difference of the trace hashes for $C$ and $E$. This typically takes less than a second to compute using average polynomial-time algorithms.
The trace hash is necessarily an isogeny invariant, and as long as the database is not too large and not too many of the $a_p(A)$ are zero, one expects the trace hash to uniquely distinguish the Prym's isogeny class from other isogeny classes in the dataset.

Of course one always needs to consider the birthday paradox, and some hash collisions may have occurred, but Pryms with distinct trace hashes cannot be isogenous, and this is useful to know.
In our computations we also introduced a \emph{twist hash}, which is computed in the same way as the trace hash, but using $|a_p(A)|$ rather than $a_p(A)$, making it invariant under quadratic twisting as well as isogeny. This twist hash gives an efficient way to certify that two abelian surfaces are not isogenous to any quadratic twists of each other.

In the second phase of the computation we attempt to compute the conductors of all the Pryms $C\to E$ that have been enumerated.
There is a tradeoff between spending effort on computing conductors versus enumerating more Pryms whose conductors might be easier to compute, so we set an upper bound on the total number of conductor and Euler factor combinations we are willing to consider for a single Prym.
Setting this bound to $L_{\max}\coloneqq 2^{28}$ meant that on average we spent about 15 seconds attempting to compute the conductor of each Prym.
As can be seen in the next section, we successfully computed a heuristic conductor less than $2^{20}$ in only about 1 in 200 cases, but in nearly 1 in 6 cases we were able to prove that the conductor is greater than $2^{20}$ using fewer than $L_{\max}\coloneqq 2^{28}$ combinations (and around 15 seconds of wall time, on average).

We should emphasise that our conductor computations are heuristic for two distinct reasons: (a) they depend on the unproven assumption that $L(A,s)$ has an analytic continuation that satisfies the functional equation \eqref{eq:func} and (b) in our computations we only consider conductors that satisfy the bounds imposed by $N\le N_{\max}$ and $\rad(N)\le r_{\max}$, we terminate our computation as soon as we find a conductor and a choice of root number and bad Euler factors that works, and we use floating-point calculations rather than interval arithmetic.  As noted in Section~\ref{sect:conductor}, one can remove heuristic (b) by running a more extensive computation that considers every conductor $N$ supported on primes dividing the minimal discriminant of $C$, uses interval arithmetic to make the numerical tests of \eqref{eq:func} fully rigorous, and verifies that there is only one value of $N$ for which the numerical test succeeds. We have not yet attempted this computation, but we plan to do so in the future.

Notwithstanding these provisions, as can be seen in the next section, for about 80 percent of the nearly half a million Pryms for which we provisionally computed the conductor there was a previously known genus 2 curve with a matching trace hash and the same conductor.  The conductors of these genus 2 curves have all been rigorously computed under assumption (a), and for about 10 percent of them (those for which the modularity criteria of \cite{BCGP} apply), (a) is actually known to hold.  This gives us confidence that our heuristic conductor computations are correct.

\section{Computational Results}\label{sect:results}

We undertook the enumeration described in the preceding section in a large parallel computation run on Google's cloud platform, using roughly 250\,000 cores, most of which were 2.45 GHz AMD Milan CPUs (Zen 3 architecture).
The total CPU time spent was on the order of 100 CPU years, with about half spent enumerating Pryms and half spent computing conductors (we chose parameters to ensure a close to 50/50 division of labour).  \Cref{table:counts,table:isogenyclasses} summarise the results of the enumeration phase, while \Cref{table:smallconductor} provides statistics on the Pryms of small conductor that were found, a surprising number of which are isogenous to Jacobians of genus 2 curves over $\Q$, several thousand of which were not previously known.  See \Cref{sect:postscript} for some particularly interesting examples.

\begin{center}
\begin{table}[htb!]
\begin{tabular}{lr}
\textbf{objects} & \textbf{count}\\\toprule
Elliptic curves $E$ with $\rad(N_E)\le 2^{10}$ in the LMFDB & 315\,607\\
Number fields $K$ of degree $\le 4$ with $|\Delta_K|\le 2^{20}$ and $\rad(\Delta_K)\le 2^{10}$ & 12\,821\\
Pairs $(E,K)$ with $\rad(N_E\Delta_K)\le 2^{10}$ and $n(E_K)>n(E)$ & 42\,504\,851\\
Genus 3 double covers $C\to E$ arising from some pair $(E,K)$ & 117\,615\,804\\ 
Nonhyperelliptic $C$ & 81\,601\,322\\ 
Hyperelliptic $C$ & 36\,014\,482\\ 
Hyperelliptic $C$ that required twisting (see \Cref{remark:hyperelliptic-twisting}) & 546\,281\\ 
$C$ that also cover a genus 2 curve (see \Cref{remark:g2cover}) & 13\,077\,189\\ 
Smooth plane quartic genus 2 covers & 6\,040\,037\\ 
Hyperelliptic genus 2 covers & 7\,037\,152\\ 
\bottomrule
\end{tabular}
\smallskip

\caption{Pryms constructed.  Here $n(E)$ and $n(E_K)$ count Mordell-Weil generators (including torsion generators).  The $E/\Q$ we considered include all $E/\Q$ with $N_E\le 500\,000$ or $\rad(N_E)|210$.}\label{table:counts}
\end{table}
\end{center}

\begin{remark}\label{remark:hyperelliptic-twisting}
Every irreducible smooth projective genus 3 curve $C/\Q$ is either a smooth plane quartic or hyperelliptic in the sense that $C$ admits a degree-2 map to a genus zero curve $Y/\Q$.  If $Y\simeq \P^1$ then $C$ admits a rational hyperelliptic model of the form $y^2=f(x)$ with $f\in \Q[x]$, but otherwise $C$ need not admit such a model; over $\Q$ it can be defined as the intersection of a hypersurface $w^2=f(x,y,z)$ and a conic $Y\colon g(x,y,z)=0$ in weighted projective space, where $f$ and $g$ are ternary quartic and quadratic forms, respectively.  The base change of $C$ to a quadratic extension $K/\Q$ over which $Y$ has a rational point will admit a hyperelliptic model with coefficients in $K$ whose Shioda invariants (see \cite{Shioda}) are rational.  In general, there may not be a curve defined over $\Q$ with a given set of rational Shioda invariants (the field of definition may differ from the field of moduli, even for hyperelliptic curves with extra automorphisms, see \cite[p.\ 86]{Huggins}), but in our situation we know by construction that $\Q$ is a field of definition for $C$, and that the reduced automorphism group has even order (due to the nonhyperelliptic involution induced by the map $C\to E$), so we can use the algorithm of Lercier and Ritzenthaler \cite{LR12,LRS13} implemented in Magma \cite{magma} to construct a hyperelliptic curve $\tilde C\colon y^2=f(x)$ with $f\in \Q[x]$ that has the same Shioda invariants as $C$; see \cite[Lemma 1.6]{LRS13}.  The curve $\tilde C$ won't necessarily be a degree-2 cover of $E$, but it will be a degree-2 cover of a genus 1 curve $\tilde E$ whose Jacobian is a quadratic twist of $E$, which we can find by enumerating quadratic characters ramified only at the primes of bad reduction of $\tilde C$.  By applying this quadratic twist to the double cover $\tilde C\to \tilde E$ we obtain a double cover $\tilde C'\to E$ whose associated Prym variety is a quadratic twist of our original Prym (even though $\tilde C'$ is \emph{not} a quadratic twist of our original geometrically hyperelliptic $C/\Q$).
\end{remark}

\begin{remark}\label{remark:g2cover}
As explained in \cite{LLRS23}, over an algebraically closed field, we can always find a Jacobian that is isogenous to our Prym.  When $C$ is hyperelliptic we are generically in the $\Z/2\Z\times \Z/2\Z$ stratum (curves with $\Z/2\Z\times\Z/2\Z \hookrightarrow \mathrm{Aut}(C)$, see \cite{LLRS23}) of the moduli space of hyperelliptic genus 3 curves, and over $\Qbar$ we can define $C$ via a hyperelliptic model $y^2=f(x^2)$ with $\deg(f)=4$, and then $\Jac(C) \sim \Jac(C_1)\times \Jac(C_2)$, with $C_1\colon y^2=f(x)$ of genus 1 and $C_2\colon y^2=xf(x)$ of genus 2, with $\Jac(C_2)$ isogenous to our Prym \cite[Prop.~1.1]{LLRS23}.
When $C$ is nonhyperelliptic we are generically in the $\Z/2\Z$ stratum (curves with $\Z/2\Z \hookrightarrow \mathrm{Aut}(C)$, see \cite{LLRS23}) of the moduli space of nonhyperelliptic genus 3 curves, and even over $\Q$ we can always define $C$ with a model of the form $y^4 + h(x,z)y^2 + f(x,z) = 0$, where $f$ and $h$ are binary forms of degrees 4 and 2, respectively, by choosing a model for which the $\Z/2\Z$ involution is $y\mapsto -y$ (efficient code for doing this can be found in our GitHub repository \cite{github}).  Over $\Qbar$ we can factor $f(x,z)$ into a product of binary quadratic forms
and apply the formulas of Ritzenthaler-Romagny \cite{RR18} to obtain equations for curves $C_1,C_2$ of genus $1,2$ such that $\Jac(C)\sim \Jac(C_1)\times\Jac(C_2)$, with $\Jac(C_2)$ isogenous to our Prym.  We call models for $C$ of the form above \emph{even models}.
In our setting, we have a degree-2 map $\phi\colon C\to E$ to a genus 1 curve.
In the nonhyperelliptic case, we can always choose an even model, but we may not be able to construct the curve $C_2$ over $\Q$ (as noted in Ritzenthaler-Romagny \cite{RR18}, the necessary rationality condition is not always satisfied).  The reverse is true in the hyperelliptic case: we may not be able to construct an even model over $\Q$ (there is a quadratic equation related to the involution induced by $\phi$ that must have a rational root), but whenever this is possible we immediately get an equation over $\Q$ for $C_2$.  Magma code that efficiently checks these rationality conditions and produces a genus 2 curve $C_2$ equipped with a map $C\to C_2$ can be found in our GitHub repository \cite{github}.  We exploit this whenever possible, as it is much easier to compute the conductor (and the $L$-function) of $\Jac(C_2)$ than of the Prym $A$ which we can access only indirectly.
\end{remark}

\begin{remark}\label{remark:g1cover}
As noted in \Cref{remark:g2cover}, when the genus 3 curve $C$ admits both the Prym map $C\to E$ and a degree-2 map $C\to C_2$ to a genus 2 curve, then we have associated isogeny decompositions $\Jac(C)\sim A\times E$ and $\Jac(C)\sim \Jac(C_2)\times E'$, where $A$ is the abelian surface corresponding to the Prym variety and $E'$ is an elliptic curve which may be isogenous to $E$ but need not be.
If $E'\sim E$ then $A\sim\Jac(C_2)$ and we have $L(A,s)=L(C_2,s)$. This will necessarily occur when there is only one involution to work with, as the map $C \to C_1$ induced by our even model for $C$ (see \Cref{remark:g2cover}) will force $\Jac(C_1)\sim E$, but this need not hold when there are additional involutions, for example, when $C$ is a Ciani quartic with automorphism group $\Z/2\Z\times \Z/2\Z$.  This is actually useful, because if we find that $\Jac(C_1)$ is not isogenous to $E$ then we immediately get a factorisation of $\Jac(C)$ and our Prym into a product of elliptic curves, that is, there must exist a third elliptic curve $E''$ for which $\Jac(C)\sim E\times E'\times E''$. Indeed, $A\sim E'\times E''$ and $\Jac(C_2)\sim E\times E''$. 
With an equation for $C_2$ in hand, it is not hard to find $E''$.
\end{remark}
\bigskip

\begin{center}
\begin{table}[htp!]
\begin{tabular}{lr}
\textbf{objects} & \textbf{lower bound}\\\toprule
Isogeny classes of Pryms & 80\,904\,678\\ 
\hspace{20pt}that contain a genus 2 Jacobian & 12\,226\,239\\ 
Isogeny classes of nonhyperelliptic Pryms & 55\,699\,050\\ 
\hspace{20pt}that contain a genus 2 Jacobian & 5\,833\,651\\ 
Isogeny classes of hyperelliptic Pryms & 27\,390\,870\\ 
\hspace{20pt}that contain a genus 2 Jacobian & 6\,538\,719\\\midrule 
Isogeny classes of Pryms up to quadratic twist & 25\,549\,552\\ 
\hspace{20pt}that contain a genus 2 Jacobian & 7\,071\,149\\ 
Isogeny classes of nonhyperelliptic Pryms up to quadratic twist & 20\,061\,326\\ 
\hspace{20pt}that contain a genus 2 Jacobian & 3\,912\,828\\ 
Isogeny classes of hyperelliptic Pryms up to quadratic twist & 7\,261\,867\\ 
\hspace{20pt}that contain a genus 2 Jacobian & 3\,473\,381\\ 
\bottomrule
\end{tabular}
\smallskip

\caption{Isogeny classes of Pryms enumerated.}
\label{table:isogenyclasses}
\end{table}
\end{center}

\begin{center}
\begin{table}[htp!]
\begin{tabular}{lr}
\textbf{objects} & \textbf{count}\\\toprule
Isogeny classes of Pryms of conductor $\le 2^{20}$ & 454\,153\\ 
\hspace{20pt} that contain a previously known genus 2 Jacobian & 373\,048\\ 
\hspace{20pt} that contain a genus 2 Jacobian via \Cref{remark:g2cover} & 235\,089\\ 
\hspace{40pt} that were previously unknown & 8\,586\\ 
Isogeny classes of generic ($\End(A_{\Qbar})=\Z$) Pryms of conductor $\le 2^{20}$ & 236\,850\\ 
\hspace{20pt} that contain a previously known genus 2 Jacobian & 217\,298\\ 
\hspace{20pt} that contain a genus 2 Jacobian via \Cref{remark:g2cover} & 165\,829\\ 
\hspace{40pt} that were previously unknown & 6\,836\\ 
\bottomrule
\end{tabular}
\smallskip

\caption{Isogeny classes of Pryms with conductor $\le 2^{20}$; all counts assume that heuristically computed conductors are correct and isogeny classes are uniquely identified by their trace hashes.}
\label{table:smallconductor}
\end{table}
\end{center}

Data for the small conductor Pryms whose isogeny classes are tabulated in Table~\ref{table:smallconductor} is available in the GitHub repository associated to this paper \cite{github}.  For each genus 3 double cover $C\to E$ we provide the following data:
\begin{itemize}
    \item The conductor $N$ (subject to heuristics (a) and (b) noted in Section~\ref{sect:conductor}).
    \item The trace hash $h$ (an isogeny class invariant).
    \item An integral equation for $C$ (either a smooth plane quartic $f(x,y,z)=0$ or a hyperelliptic curve $y^2 +h(x)y=f(x)$).
    \item Weierstrass coefficients $a_1,a_2,a_3,a_4,a_6$ for a minimal integral model of $E$.
    \item The root number $\varepsilon$ and the Euler factors $L_p(T)$ for primes $p|\Delta(C)$ that were used to deduce $N$ via the functional equation \eqref{eq:func}.
    \item In cases where \Cref{remark:g2cover} applies we provide an integral equation for the genus 2 curve $C_2$ whose Jacobian is isogenous to the Prym variety $A$.
\end{itemize}
The previously unknown isogeny classes of genus 2 curves over $\Q$ of conductor $N\le 2^{20}$ may contain multiple $\Q$-isomorphism classes of Jacobians.  These can be exhaustively enumerated via \cite{vBCCK}, and we plan to do this before adding our data to the LMFDB.

\subsection{\texorpdfstring{Genus 2 curves over $\Q$ of 2-power conductor}{Genus 2 curves over the rationals of 2-power conductor}}\label{sect:postscript}
In his 1996 invited talk \textit{Computational aspects of curves of genus at least $2$} at the second Algorithmic Number Theory Symposium (ANTS II), Bjorn Poonen \cite{Poonen} proposed the problem of enumerating all genus 2 curves over $\Q$ whose Jacobians have good reduction away from 2; this was shortly after the easier problem of enumerating all genus 2 curves over $\Q$ with good reduction away from 2 had been solved by Smart \cite{Smart}, building on previous joint work with Merriman \cite{MS93}.  Little progress was made on this problem in the next two decades, but there is substantial recent progress due to Robin Visser.  In his doctoral thesis \cite{Visser} Visser presents a list of 512 genus 2 curves $C/\Q$ whose Jacobians have good reduction away from 2, obtained through a wide variety of means and extensive search.  We are happy to add ten new curves to this list:
\begin{center}
\footnotesize
\begin{tabular}{lll}
$N$ & $|D|$ & curves\\\midrule
$2^{14}$ & $2^{54}3^{22}11^{22}$ & $\pm y^2 = 7161x^6+7722x^5-40887x^4-74844x^3+46431x^2+17226x-5313$\\
$2^{16}$ & $2^{49}3^{22}11^{22}$ & $\pm y^2 = 5115x^6+26928x^5+18117x^4+5016x^3+19305x^2-22968x+7359$\\
$2^{20}$ & $2^{54}13^{22}$ & $\pm y^2 = 3107x^6+17576x^5+18642x^4+37284x^2-70304x+24856$\\
$2^{20}$ & $2^{54}13^{22}$ & $\pm y^2 = 1287x^6-9854x^5-5993x^4+100308x^3+93873x^2-9854x-18863$\\
$2^{20}$ & $2^{54}13^{22}$ & $\pm y^2 = 7501x^6+27430x^5+49933x^4+75452x^3+37947x^2+27430x-25077$\\\bottomrule
\end{tabular}
\end{center}
\bigskip
All ten of these curves have primes of almost good reduction that divide the minimal discriminant of the curve but not the conductor of the Jacobian.  These are all (non-quadratic) twists of curves on Visser's list and have Jacobians isogenous to Jacobians of curves on Visser's list, but neither the twists nor the isogenies are easy to discover computationally. We found them simply because their Jacobians happen to be isogenous to Pryms found by our search.  In total we found 290 isomorphism classes of genus 2 curves $C/\Q$ whose Jacobians have $2$-power conductor; the other 280 already appear in Visser's list.


\end{document}